\newtheorem{theorem}{Theorem}[section]
\newtheorem{proposition}[theorem]{Proposition}
\theoremstyle{definition}
\newtheorem{example}[theorem]{Example}
\theoremstyle{remark}
\newtheorem{remark}[theorem]{Remark}
\numberwithin{equation}{section}
\newcommand{\R}{\mathbb{R}}
\newcommand{\N}{\mathbb{N}}
\newcommand{\norm}[1]{\lVert#1\rVert_0}
\providecommand{\tto}{\mathop{\rightrightarrows}\nolimits}
\newcommand{\ind}{\mathds{1}}
\providecommand{\dom}{\mathop{\rm dom}\nolimits}
\providecommand{\gph}{\mathop{\rm gph}\nolimits}
\providecommand{\argmin}{\mathop{\rm argmin}}
\newcommand{\DS}[1]{{\color{black}{#1}}}
\newcommand{\DA}[1]{{\color{black}{#1}}}
\newcommand{\DAs}[1]{}
\newcommand{\DAss}[1]{}
\begin{document}
	
	\begin{center}
		{\LARGE Cardinality Constraints in Single-Leader-Multi-Follower games}
		
		\vspace{1cm}
		
		{\Large \textsc{Didier Aussel, Daniel Lasluisa, David Salas}}
	\end{center}
	
	\bigskip
	
	\noindent\textbf{Abstract.} This work explores bilevel problems in the context of cardinality constraints. More specifically Single-Leader-Multi-Follower games (SLMFG) involving cardinality constraints are considered in two different configurations: one with the cardinality constraint at the leader's level and a mixed structure in which the cardinality constraint is split between leader and followers problem. We prove existence results in both cases and provided equivalent reformulations allowing the numerical treatment of these complex problems. The obtained results are illustrated thanks to an application to a facility location problem.
	
	\bigskip
	
	\noindent\textbf{Key words.} Cardinality constraints, Bilevel optimization, Single-Leader-Multi-Follower games, Location problem. 
	
	\vspace{0.6cm}
	
	\noindent\textbf{AMS Subject Classification} \ 49J52,  49J53, 91A10, 90-10.
	
	

\section{Introduction}\label{sec:intro}
 Single-Leader-Multi-Follower (SLMF) games, \DA{a terminology introduced in \cite{PangFukushima2005} but games already considered in \cite{Stack}}, are bilevel games where one agent, the leader, interacts with a group of other agents, the followers, under a hierarchical structure. The leader \DA{takes} her decision variable $x\in\R^{p}$, \DA{and anticipates the reaction of} the followers \DAs{react} by solving a (generalized) Nash equilibrium problem, parametrized by $x$. \DAs{The leader, by knowing the equilibrium problem of the followers, anticipates this reaction and takes it into account during her decision process. }
 
 The games where only one follower is considered are known as Stackelberg games. They have been largely studied during the last decades, and their applications are now well spread along many interdisciplinary fields (see, e.g., \cite{sinha2017review,DempeZemkoho2020,BeckLjubicSchmidt2023,KleinertLabbeLjubicSchmidt2021} for some recent reviews and advances on the field). The case where multiple followers are involved is considerably more challenging, and therefore it still has a lot of open problems in comparison to its single-follower counterpart (see, e.g., \cite{AusselSvensson2020short,HuFukushima2015}).

A usual method \DAs{to try} to solve a Single-Leader-Multi-Follower game is to reformulate it as a single-level optimization problem, based on the natural variational formulation of the lower-level game played by the followers. Under standard convexity/continuity assumptions, one has that a vector $y\in\R^{q}$ is an equilibrium point for the lower-level game, if and only if it solves the coupled Karush-Kuhn-Tucker optimality conditions of each follower's optimization problem. The resulting problem fits into the class of \emph{Mathematical Programming with Complementarity Constraints (MPCC)}, since the complementarity equations naturally appear in KKT-based variational formulations. Recall that a complementarity constraint involving a vector variable $z\in\R^n$ is of the form
 \begin{equation}\label{eq:CompConstraint}
    T_1(z)\cdot T_2(z) = 0,
 \end{equation} 
 where $T_1,T_2:\R^{n}\to \R$ are two affine maps. As an illustration, the most common (but not the only) complementa\-rity constraint is given by complementary slackness in linear programming. 
 The MPCC reduction is classic in the literature of bilevel programming and it can be found in monographs like \cite{DempeEtAll2015,Dempe2002Foundations} for the case of one follower, and in \cite{AusselSvensson2020short} for the general SLMF game.
 
 In this work, we aim to study a particular class of SLMF games, involving the so-called \emph{cardinality constraints}. A cardinality constraint over a vector variable $z\in \R^{n}$ has the form of
 \begin{equation}\label{eq:CardinalityConstraint}
 \| z \|_0 = |\{ i\in \{1,\ldots,n\}\, :\, z_i\neq 0  \}| \leq K,
 \end{equation}
 where $K$ is a positive (integer) constant and $|A|$ denotes the cardinality of the set $A$. The function $ \| \cdot \|_0$ is known as the $\ell_0$-norm (or $\ell_0$-pseudonorm) and it counts the number of nonzero entries of a vector. It is commonly used in mathematical programming to model sparsity. However, due to its structure, the $\ell_0$-norm is hard to deal with. While most common approaches in the literature consist in replacing the $\ell_0$-norm by an alternative with more regularity properties (such as the $\ell_1$-norm), recent studies have tackled mathematical programming problems involving the $\ell_0$-norm directly (see, e.g.,  \cite{feng2013complementarity,ChancelierDeLara2021,ChancelierDeLara2022}).
 
 A very important contribution to deal with optimization problems with cardinality constraints was developed in \cite{BurdakovKanzowSchwartz2016,CervinkaKanzowSchwartz2016}, where the constraint \eqref{eq:CardinalityConstraint} was equivalently written as complementarity constraints as follows:
 \begin{equation}\label{eq:KeyReformulation}
 \|z\|_0\leq K \iff \exists u\in [0,1]^{n} \text{ such that }\left\{\begin{array}{l}\sum_{i=1}^n u_i \geq n-K,\text{ and }\\ \forall i\in\{1,\ldots,n \},\,u_iz_i = 0.\end{array}\right.
 \end{equation}
 This idea has been deeply exploited to provide constraints qualification, algorithms and first-order optimality conditions (see, e.g., \cite{FengEtAl2018,BucherSchwartz2018,KanzowRaharjaSchwartz2021,KrulikovskiEtAl2021,KrulikovskiEtAl2023,Ribeiro2022}). \DA{But this analysis has been systematically limited to single level optimization problems.}
 
Here, we will study SLMF games with cardinality constraints based on the following straightforward remark: the MPCC reformulation of SLMF games, and the reformulation \eqref{eq:KeyReformulation} of (single-level) optimization problems with cardinality constraints follow the same structure, namely, they encode the ``hard'' constraints by means of new variables (multipliers) and complementarity constraints.

The rest of the article is organized as follows: in Section \ref{sec:pre} we recall some results and fix some notation needed for the sequel. Section \ref{sec:upper} is devoted to study the case where the cardinality constraint acts as a coupling constraint in the leader's problem. We provide existence results and the analysis of MPCC single-level reformulations. 
In Section \ref{sec:mixed}, we profit from the complementarity constraints reformulations to provide an alternative model, by splitting the actions of the multipliers in \eqref{eq:KeyReformulation}: the leader decides the value of the multipliers, but the complementarity constraints are put at the followers' level. We show that this ``mixed'' version of the problem enjoys better existence results and similar single-level reformulations. Here, the split multipliers act as interdiction variables in an interdiction-like game (see, e.g., \cite{Fischetti2019Interdiction}). We finish this work by studying a variant of the well-known facility location problem \cite{marianov2003location,dan2019competitive}, where the leader must respect a cardinality constraint. 
 
\section{Preliminaties and notation \label{sec:pre}}
For any integer $n\in \N$, we write $[n] = \{1,\ldots,n\}$. From now on, we will work over finite-dimensional euclidean spaces $\R^n$, endowed with their respective usual inner products $\langle\cdot,\cdot\rangle$ and their induced euclidean norms. For any two vectors $a,b\in \R^n$, we write $a\odot b$ to denote \DAs{the} their Hadamard product, that is, $a\odot b = (a_ib_i~:~i\in [n]) \in\R^n$. 
For an extended-real valued function $f : \R^n \rightarrow \R \cup \{+\infty\}$, we denote by $\dom f$ the (effective) domain of $f$, that is, $\dom f = \{x\in\R^n~:~f(x)< +\infty\}$. For a given $\gamma \in \R$, we denote by $[f\leq \gamma]$ the sublevel set of $f$ of value $\gamma$. Recall that the function $f$ is said to be
\begin{itemize}
    \item \emph{convex} if $f(\lambda x + (1-\lambda)y) \leq \lambda f(x) + (1-\lambda)f(y)$ for each $x,y \in \dom(f)$ and $\lambda \in \DA{[0,1]}$.
    \item \emph{quasiconvex} if $f(\lambda x + (1-\lambda)y) \leq \max\{f(x),f(y)\}$, for each $x,y \in \dom(f)$ and $\lambda \in \DA{[0,1]}$.
\end{itemize}
It is well-known that every convex function is quasiconvex, and a function $f$ is quasiconvex if and only if the sublevel sets $[f\leq\gamma]$ are all convex (see, e.g., \cite{Aussel2014New}).

A function $h:\R^{n}\to \R$ is said to be \emph{weakly analytic} if for any two vectors $x,y\in\R^{n}$, the following implication holds:
\begin{equation}\label{eq:weaklyAnalyticImp}
    \begin{split}
        t\in\R\mapsto h(x+ty)&\text{ is constant over an open interval }\\
        &\implies h(x +ty) = h(x),\text{ for all }t\in \R.
    \end{split}
\end{equation}
In other words, $h$ is weakly analytic if, whenever it is constant over a segment, it must be constant over the whole line containing that segment. Of course, analytic function, such as affine functions, are weakly analytic (see, e.g., \cite{bank1983nonlinear}).

Let $X$ and $Y$ be two non-empty sets, and let us denote by $\mathcal{P}(Y)$ the power set of $Y$. A set-valued map (also known as correspondence) $F$ is a function $F : X \longrightarrow \mathcal{P}(Y)$, that is, a function which, for every $x\in X$, assigns a set $F(x) \subseteq Y$. We denote such a set-valued map as $F : X \rightrightarrows Y.$

Whenever $X$ and $Y$ are metric spaces, following \cite{AubinFrankowska2009Set}, we say that:
\begin{itemize}
    \item A set-valued map $F:X\tto Y$ is \emph{upper semicontinuous} at a point $x_0 \in X$ if, for each neighbourhood $V$ of $F(x_0)$ in $Y$, there exists a neighbourhood $U$ of $x_0$ in $X$ such that $$F(x)\subset V,\;\; \forall x \in U.$$
    
    \item A set-valued map $F:X\tto Y$ is \emph{lower semicontinuous} at a point $x_0 \in X$ if, for each open set $V \subset Y$ for which $F(x_0)\cap V \neq \emptyset $
    there exists a neighbourhood $U$ of $x_0$ such that 
    \[
    F(x)\cap V \neq \emptyset,\;\; \forall x \in U.
    \]
\end{itemize}

We define the domain of $F$ and the graph of $F$ as $\dom F = \{ x\in X~:~F(x)\neq\emptyset \}$, and $\gph F = \{ (x,y)\in X\times Y~:~y\in F(x) \}$, respectively. We say that $F$ is closed  if its graph is closed as a subset of $X\times Y$.

The following proposition, that will be used in the sequel, establishes that parametric sets given by separable weakly analytic constraints are lower semicontinuous (in fact, they enjoy further continuity properties, see, e.g., \cite[Theorem 4.3.5]{bank1983nonlinear} or \cite{SalasSvensson2023Bayesian}).
\begin{proposition} [{\cite[Theorem 3.3.3]{bank1983nonlinear}}]\label{prop:lsc-weaklyAnalytic} Let $X\subset \R^{n}$ be a closed set, and consider a set-valued map $F:X\tto \R^m$ given by 
\[
F(x) = \{ y\in \R^{m}~:~g_k(y) \leq \varphi_k(x),\,\forall k\in [m] \},
\]
where, for each $k\in[m]$, \DA{$\varphi_k:\R^n\to\R$ are continuous functions and the functions $\{g_k\}_{k\in[m]}$ are continuous} convex and weakly analytic, then the set-valued map $F$ is lower semicontinuous.
\end{proposition}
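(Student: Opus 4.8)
The plan is to verify lower semicontinuity directly from the definition, after reducing it to a local, pointwise statement. Fix $x_0\in\dom F$ and $\bar y\in F(x_0)$. Since every open set $V$ meeting $F(x_0)$ contains a ball $B(\bar y,\varepsilon)$ with $\bar y\in F(x_0)\cap V$, it suffices to show that for every $\varepsilon>0$ there is a neighbourhood $U$ of $x_0$ such that $F(x)\cap B(\bar y,\varepsilon)\neq\emptyset$ for all $x\in U$. First I would split the index set $[m]$ into the inactive part $I=\{k:g_k(\bar y)<\varphi_k(x_0)\}$ and the active part $A=\{k:g_k(\bar y)=\varphi_k(x_0)\}$. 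For $k\in I$ the strict inequality, together with the continuity of $g_k$ and of $\varphi_k$, survives on a whole product neighbourhood (in $y$ around $\bar y$ and in $x$ around $x_0$), so these constraints never obstruct feasibility locally; all the difficulty is concentrated in $A$.

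The core mechanism is a convexity argument. Suppose first that the active system satisfies a Slater-type condition near $\bar y$, i.e. there exists $\hat y$, which we may take arbitrarily close to $\bar y$, with $g_k(\hat y)<\varphi_k(x_0)$ for all $k\in A$. Then, by convexity of each $g_k$ and since $g_k(\bar y)=\varphi_k(x_0)$ on $A$, the segment $y_\lambda=(1-\lambda)\bar y+\lambda\hat y$ satisfies $g_k(y_\lambda)\le(1-\lambda)\varphi_k(x_0)+\lambda g_k(\hat y)<\varphi_k(x_0)$ for every $\lambda\in(0,1]$ and every $k\in A$, so $y_\lambda$ is strictly feasible for the active constraints at $x_0$. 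Choosing $\lambda$ small makes $y_\lambda\in B(\bar y,\varepsilon)$, and since only finitely many strict inequalities $g_k(y_\lambda)<\varphi_k(x_0)$ are involved, the continuity of the $\varphi_k$ provides a neighbourhood $U$ of $x_0$ on which $g_k(y_\lambda)<\varphi_k(x)$ persists for all $k$. Hence $y_\lambda\in F(x)\cap B(\bar y,\varepsilon)$ for all $x\in U$, as required.

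The main obstacle is therefore the degenerate case in which no such strictly feasible $\hat y$ exists, i.e. some active $g_k$ is pinned at its minimal value along $F(x_0)$. This is exactly where weak analyticity enters. If a convex $g_k$ is constant on a nondegenerate segment, then by \eqref{eq:weaklyAnalyticImp} it is constant on the entire line; consequently the set of directions along which all active constraints stay constant is a linear subspace $L$, and the minimal (\emph{implicit equality}) faces are affine. I would then pass to the relative interior of $F(x_0)$: approximating $\bar y$ within $\varepsilon/2$ by a point $y^\ast\in\operatorname{ri}F(x_0)$, every non-implicit active constraint becomes strictly satisfied at $y^\ast$ and is handled as above, while the implicit-equality constraints remain flat along $\operatorname{aff}F(x_0)$. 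Weak analyticity guarantees that these are genuine whole-line flats, so the remaining feasibility question reduces to a purely affine (linear-algebraic) one along $L^{\perp}$, which can be propagated to nearby $x$ using the continuity of the $\varphi_k$. The delicate points I expect to cost the most effort are making this relative-interior reduction precise and, above all, keeping all estimates uniform in $x$ over a single neighbourhood $U$; the flat/implicit-equality constraints are the crux, and it is precisely the weak analyticity hypothesis that excludes the pathological convex functions (such as a plus-function $\max\{0,a^{\top}y-b\}$, flat on one ray and strictly increasing on the other) for which lower semicontinuity would otherwise fail.
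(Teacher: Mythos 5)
The paper offers no proof of this proposition at all --- it is quoted directly from \cite[Theorem 3.3.3]{bank1983nonlinear} --- so your attempt has to be judged on its own merits rather than against an internal argument. The first half of what you do is correct: the reduction of lower semicontinuity to balls $B(\bar y,\varepsilon)$, the split into inactive and active constraints, and the Slater case are all fine; when the active system has a strictly feasible point, your convexity argument produces $y_\lambda\in B(\bar y,\varepsilon)$ with $g_k(y_\lambda)<\varphi_k(x_0)$ for every $k$, and continuity of the finitely many $\varphi_k$ propagates this to a neighbourhood of $x_0$.

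The degenerate case, however, contains a genuine gap, and in fact no argument can close it as you have set it up, because the statement you are trying to prove (lower semicontinuity in the unrestricted sense of the paper's definition, quantified over \emph{all} $x\in U$) is false. Take $n=m=1$, $X=\R$, $g(y)=y^2$ and $\varphi(x)=-x^2$: all hypotheses hold ($y^2$ is analytic, hence weakly analytic, convex and continuous), $F(0)=\{0\}$, yet $F(x)=\emptyset$ for every $x\neq 0$, so $F$ is not lower semicontinuous at $0$. This example lands exactly on the spot your sketch glosses over: the active constraint at $(\bar y,x_0)=(0,0)$ is an implicit equality whose flat is the single point $\{0\}$; weak analyticity \eqref{eq:weaklyAnalyticImp} is vacuous there (there is no nondegenerate segment of constancy), your subspace $L$ is trivial, and the ``remaining feasibility question along $L^{\perp}$'' is whether $y^2\leq\varphi(x)$ has a solution --- which is not affine and cannot be ``propagated to nearby $x$ by continuity of $\varphi_k$''. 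What \cite{bank1983nonlinear} actually proves is lower semicontinuity \emph{relative to} $\dom F$, i.e.\ $F(x)\cap V\neq\emptyset$ is required only for $x\in U\cap\dom F$; that weaker statement is also all the paper ever uses (in the proof of Theorem \ref{thm:existence-Mixed}, and inside Theorem \ref{thm:ExistenceUpperLevel}, lower semicontinuity is invoked only along sequences of points at which the feasible sets are known to be nonempty). This diagnosis also shows why your sketch cannot be repaired within its own framework: a correct proof of the domain-restricted statement must take as input an arbitrary feasible point $z\in F(x)$ (possibly far from $\bar y$) for the nearby parameter $x$, and use convexity together with weak analyticity to manufacture from it a feasible point inside $B(\bar y,\varepsilon)$; your argument never uses nonemptiness of $F(x)$ at nearby points, so it is attempting to prove the unrestricted, false version. (One subsidiary claim of yours does survive: for a finite convex function, constancy along a whole line forces that direction into its lineality space, so your set $L$ is indeed a linear subspace --- but this is vacuous precisely in the hard, zero-dimensional-flat case exhibited above.)
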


\paragraph{Constraint qualification for convex optimization problems:} Let us consider an abstract optimization problem
\begin{equation}\label{eq:Problemonelevel}
    \min_{z\in \Omega} f(z),
\end{equation}
\DA{where} the feasibility domain, $\Omega\subset \R^{n}$, \DS{ is a convex and closed set,} defined in terms of a set of constraints \DA{that is}
\begin{equation}\label{eq:General-constraintset}
    \Omega = \{z~:~g_k(z)\leq 0,\; k=1,\ldots,m \},
\end{equation}
\DA{with}, for each $k\in [m]$, the function $g_k:\R^{n}\to\R$ is \DS{(everywhere) differentiable}. \DAs{In this work, we focus only in convex smooth problems, that is, we will assume that objective function is of class $\mathcal{C}^1$ and convex, and all constraint functions are of class $\mathcal{C}^1$ and quasiconvex. In particular, this yields that the feasible sets $\Omega$ that we consider are always convex and closed.}

For $z\in\Omega$, we consider \textit{normal cone} of $\Omega$ at $z$ \DS{(in the sense of convex analysis)}, denoted by $N_{\Omega}(z)$, as
\begin{equation}\label{eq:normalConeDef}
N_{\Omega}(z)= \{ \nu\in\R^n~:~\langle \nu,y-z\rangle \leq 0,\,\forall y\in\Omega \}.
\end{equation}
We will say that the set $\Omega$ with its representation \eqref{eq:General-constraintset} verifies the \textit{Guignard's Constraint Qualification} \cite{guignard1969generalized} at $z\in \Omega$ if
\begin{equation}\label{eq:CQ}
    N_{\Omega}(z) = \left\{ \sum_{k=1}^{p} \lambda_k\nabla g_k(z)\, \Big|\ \lambda\odot g(z) = 0,\; \lambda\geq 0\right\}   
\end{equation}
There exist many sufficient conditions to ensure that $\Omega$ verifies \eqref{eq:CQ}. In particular, if all functions $g_k$ are affine, then  \eqref{eq:CQ} holds (see, e.g., \cite{solodov2010constraint}).

\paragraph{Single-Leader-Multi-Follower games:} We consider a game of $M+1$ agents with one leader and $M$ followers, where the latter are indexed by $i\in [M]$. The leader controls a variable $x \in \mathbb R^p$, while each follower $i\in [M]$ controls a variable $y_i \in \mathbb R^{q_i}$. We set $q = \sum_{i=1}^M q_i$.  We will write $y=(y_i,y_{-i}) \in \mathbb{R}^{q_i}\times \mathbb{R}^{q_{-i}} = \mathbb{R}^{q}$ to emphasize  the decision variable of follower $i$ within the aggregated decision vector $y$. Here, $y_{-i}$ stands for the aggregated decision vector of all followers, but follower $i$.

\begin{remark}\label{rem:Index} In what follows, we will reserve the index letter $i$ exclusively to denote the $i$th follower. Thus, for a vector $u\in\R^{q}$, we write $u_i$ to denote the vector in $\R^{q_i}$ corresponding to the coordinates in $u$ associated to the $i$th follower. To denote coordinates of a vector or a function, we will use other indexes, such as $k$ or $j$.
\end{remark}

 For the leader's problem, the objective function, which depends on \DA{both variables $x$ and $y$}, will be denoted by $\theta: \R^p \times \R^q \mapsto \R$. We denote by $X$ the feasible set for the leader's decision variable $x$. For each follower $i\in [M]$, following \eqref{eq:Problemonelevel}-\eqref{eq:General-constraintset}, \DAs{we denote by} $f_i: \mathbb R^p \times  \mathbb{R}^{q_i}\times \mathbb{R}^{q_{-i}}  \mapsto \mathbb R$ \DA{stands for her} cost function, and \DAs{by} $g_i: \mathbb R^p\times \mathbb R^{q_{i}}\times \mathbb{R}^{q_{-i}} \mapsto \mathbb R^{m_{i}}$ \DA{for her} constraint function, both functions depending on the variables of all players $(x, y_i,y_{-i})$. \DAs{We will write $p= \sum_{i=1}^M p_i$.} \DS{We will write $q = \sum_{i=1}^M q_i$.} We denote by $Y_i: X\times \DAss{\mathbb R^p\times} \mathbb R^{q_{-i}}\tto \mathbb R^{q_i}$  the feasibility set-valued map of follower $i$: given a leader's decision $x$, and the vector of decisions of the other followers, $y_{-i}$, the set $Y_i(x,y_{-i})$ is given by
\begin{equation}\label{eq:follower-feasibilityMap}
    (x,y_{-i})\longmapsto Y_{i}(x,y_{-i}) := \{ y_i\ : \ g_i(x,y_i,y_{-i})\leq 0 \}.
\end{equation}   

In the \DA{sequel} we will indistinctly use the notation $g_i(x,y_{i},y_{-i})\leq 0$ or $y_i\in Y_i(x,y_{-i})$ to refer to the constraints of the $i$th follower. The  general SLMF game  is defined in the following way:

\begin{equation}\label{eq:UpperLevel-Card_Preli}
    \begin{array}{|c|c|}
        \hline
        \begin{array}{cl}
            \displaystyle\min_{x,y}   & \theta(x,y)\\
            \text{s.t.} & \left\{
            \begin{array}{l}
                x \in X,\\
                G(x,y)\leq 0,\\
                y \in GNEP(x).
            \end{array}\right.
        \end{array}
        &
        \begin{array}{cl}
            \displaystyle\min_{y_i}   &f_i(x,y_i,y_{-i})\\
            \text{s.t.} & \DS{g_i(x,y_i,y_{-i})\leq 0.}
        \end{array}\\
        \hline
        \text{Leader}&\text{$i$th Follower}\\
        \hline
    \end{array}
\end{equation}
where $G:\R^{p}\times\R^{q}\to \R^r$ is a continuous function, and $GNEP(x)$ is the \DAs{solution} set-valued map of generalized Nash \DS{equilibriums} of the followers' \DS{problem} parametrized by $x$ (see, e.g. \cite{Fischetti2019Interdiction,PangFukushima2005}). Recall that the solution of the GNEP of the followers is given by
\begin{equation}\label{eq:GNEP-def}
 y\in GNEP(x) \iff \forall i\in [M], y_i\in \argmin\{ f_i(x,z,y_{-i})~:~z\in Y_i(x,y_{-i}) \}.
\end{equation}

In \eqref{eq:UpperLevel-Card_Preli}, the constraints $G(x,y)\leq 0$ are known as \emph{coupling constraints}, since they depend on both, upper-level and lower-level variables. They induce an extra difficulty for feasibility. It is the leader that must verify feasibility of $x\in X$ considering that
\begin{equation}\label{eq:FeasibilityLeader}
x\in X \text{ is feasible }\iff \{y~:~G(x,y)\leq 0\}\cap GNEP(x)\neq\emptyset.
\end{equation}
However, followers do not take into account the coupling constraint when they solve their equilibrium problem.

Finally, formulation  \eqref{eq:UpperLevel-Card_Preli} is known as optimistic, since the leader can choose the \DA{"better"} equilibrium point $y\in GNEP(x)$. Other formulations, such as the classic pessimistic approach (see, e.g. \cite{AusselSvensson2020short}) or the resent Bayesian approach (see \cite{SalasSvensson2023Bayesian}), are available in the literature, but they are out of the scope of the work.

\section{SLMF games with upper-level cardinality constraints \label{sec:upper}}
	
In this section we study the main object of our work: a SLMF game where the coupling constraint $G(x,y)\leq 0$ is a cardinality constraint. That is, we will study the problem

\begin{equation}\label{eq:UpperLevel-Card}
    \begin{array}{|c|c|}
        \hline
        \begin{array}{cl}
            \displaystyle\min_{x,y}   & \theta(x,y)\\
            \text{s.t.} & \left\{
            \begin{array}{l}
                x \in X,\\
                \norm{y} \leq K\\
                y \in GNEP(x)
            \end{array}\right.
        \end{array}
        &
        \begin{array}{cl}
            \displaystyle\min_{y_i}   &f_i(x,y_i,y_{-i})\\
            \text{s.t.} &g_i(x,y_i,y_{-i})\leq 0.
        \end{array}\\
        \hline
        \text{Leader}&\text{$i$th Follower}\\
        \hline
    \end{array}
\end{equation}

\begin{remark}
For the sake of simplicity, in this section we consider a single (global) cardinality constraint $	\norm{y} \leq K$. However, for any partition $\mathcal{S} = \{ s_1,\ldots,s_r\}$ of the involved index set $\{(i,j)~:~i\in~[M],~j\in [q_i]\}$, we can consider a sequence of ``independent'' cardinality constraints of the form
\begin{equation}
\norm{y_{s_l}} \leq K_l,\quad\forall l\in[r],
\end{equation}
where $y_{s_l} = (y_{i,j}~:~(i,j)\in s_l)$. The results of this section can be directly extended to this general case.
\end{remark}

\begin{remark}\label{rem:LowerLevelCardinality} One could also study the case where the cardinality constraint $\|y\|_0\leq K$ is put as a shared constraint for the followers' equilibrium problem, that is,
\begin{equation}\label{eq:UpperLevel-Card}
    \begin{array}{|c|c|}
        \hline
        \begin{array}{cl}
            \displaystyle\min_{x,y}   & \theta(x,y)\\
            \text{s.t.} & \left\{
            \begin{array}{l}
                x \in X,\\
                y \in GNEP(x)
            \end{array}\right.
        \end{array}
        &
        \begin{array}{cl}
            \displaystyle\min_{y_i}   &f_i(x,y_i,y_{-i})\\
            \text{s.t.} &\begin{cases}
            g_i(x,y_i,y_{-i})\leq 0,\\
                \norm{y} \leq K.
                \end{cases}
        \end{array}\\
        \hline
        \text{Leader}&\text{$i$th Follower}\\
        \hline
    \end{array}
\end{equation}
Such formulation is way more challenging, since the shared cardinality constraint might interfere with convexity and lower semicontinuity of the constraints maps of the followers. \DA{Such pathological behaviour has been revealed in \cite{Dan_PhD} but in} this first work, we will left this formulation out of the scope.
\end{remark}
\subsection{Existence of solutions}
The main problem with formulations with cardinality constraints in the upper level as in \eqref{eq:UpperLevel-Card}, is that the constraint $\norm{y}\leq K$ is a coupling constraint. This might lead to infeasibility, even if the equilibrium set $GNEP(x)$ is nonempty for every $x\in X$, as the following example shows.

\begin{example}[Infeasibility at the upper level problem by the cardinality constraint]
    \label{example_infeasibility}
    We con\-si\-der the leader's problem as 
    \begin{equation}\label{eq:exampUpperLeader}
        \begin{array}{cl}
            \displaystyle\min_{x,y} & x\\
            \text{s.t} & \left\{\begin{array}{l}      
                x \in [1,2]\\
                \norm{y} \leq 1\\
                y \in GNEP(x)
            \end{array}\right.
        \end{array}
    \end{equation}
    while the followers' equilibrium problem, for which $GNEP(x)$ is the solution set, is given by
    \begin{equation}\label{eq:exampleUpperLF}
        \begin{array}{|c|c|}
            \hline
            \begin{array}{cl}
                \displaystyle\min_{y_1}   & y_1\\
                \text{s.t.} & \left\{
                \begin{array}{l}
                    x \leq 2y_1\\
                    y_1 \in [0,1]\\
                \end{array}\right.
            \end{array}
            &
            \begin{array}{cl}
                \displaystyle\min_{y_2}   & -y_2\\
                \text{s.t.} & \left\{
                \begin{array}{l}
                    y_2 \leq y_1\\
                    y_2 \in [0,1]\\
                \end{array}\right.
            \end{array}\\
            \hline
            \text{Follower 1}&\text{Follower 2}\\
            \hline
        \end{array}
    \end{equation}
    
        It is not hard to check that for any $x\in [1,2]$, the followers' equilibrium problem has a unique solution given by $(y_1(x),y_2(x)) = (x/2,x/2)$. Thus, the solution map $x\mapsto GNEP(x)$ enjoys several amenable properties: it is single-valued, continuous, linear and nonempty for every leader's decision. However, the upper-level problem is infeasible. \hfill$\Diamond$
\end{example}


Nevertheless, if one assumes that there exists at least one feasible point for the leader, one can replicate the standard existence result of \cite{AusselSvensson2018Some}.

\begin{theorem}\label{thm:ExistenceUpperLevel}
    Consider problem \eqref{eq:UpperLevel-Card} and assume that
    \begin{itemize}
        \item[(i)] $\theta$ is lower semicontinuous and $X$ is closed.
        \item[(ii)] for each $i\in [M]$, $f_i$ is continuous.
        \item[(iii)]   for each $i\in [M]$, the graphs of  $Y_i$ are uniformly bounded, and $X$ is bounded. 
        \item[(iv)] for each follower $i \in [M]$, the set-valued map $Y_i$
        is lower semicontinuous with nonempty closed graph.
    \end{itemize}
    Then, either the SLMF game \eqref{eq:UpperLevel-Card}  is infeasible, or it admits a solution.
\end{theorem}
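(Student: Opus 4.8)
The plan is to apply the direct method of the calculus of variations to the feasible set
\[
\mathcal{F} = \{ (x,y)\in X\times\R^q~:~\norm{y}\leq K,\ y\in GNEP(x) \}.
\]
Assuming $\mathcal{F}\neq\emptyset$, I would take a minimizing sequence $(x^n,y^n)\in\mathcal{F}$ with $\theta(x^n,y^n)\to\inf_{\mathcal{F}}\theta$. By assumption (iii), $X$ is bounded and the graphs of the maps $Y_i$ are uniformly bounded; since $y^n_i\in Y_i(x^n,y^n_{-i})$ for every $i$, the whole sequence $(x^n,y^n)$ lives in a bounded subset of $\R^p\times\R^q$, so up to a subsequence it converges to some $(\bar x,\bar y)$. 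It then remains to show that $(\bar x,\bar y)\in\mathcal{F}$ and that it is optimal; the latter is immediate, because $\theta$ is lower semicontinuous by (i), whence $\theta(\bar x,\bar y)\leq\liminf_n\theta(x^n,y^n)=\inf_{\mathcal{F}}\theta$. The crux is therefore the closedness of $\mathcal{F}$.

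The main obstacle, and the technical heart of the argument, is to verify that the graph of the equilibrium map $GNEP$ is closed, i.e. that $\bar y\in GNEP(\bar x)$. I would follow the standard argument of \cite{AusselSvensson2018Some}, fixing an arbitrary follower $i\in[M]$ and checking the two halves of the variational characterization \eqref{eq:GNEP-def}. For feasibility, since $(x^n,y^n_{-i},y^n_i)\in\gph Y_i$ and this graph is closed by (iv), the limit satisfies $\bar y_i\in Y_i(\bar x,\bar y_{-i})$. For optimality, I would fix any competitor $z\in Y_i(\bar x,\bar y_{-i})$ and exploit the lower semicontinuity of $Y_i$ granted by (iv): along the converging sequence $(x^n,y^n_{-i})\to(\bar x,\bar y_{-i})$ there exist $z^n\in Y_i(x^n,y^n_{-i})$ with $z^n\to z$. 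The equilibrium inequality $f_i(x^n,y^n_i,y^n_{-i})\leq f_i(x^n,z^n,y^n_{-i})$ then passes to the limit using the continuity of $f_i$ from (ii), yielding $f_i(\bar x,\bar y_i,\bar y_{-i})\leq f_i(\bar x,z,\bar y_{-i})$. As $z$ was arbitrary, $\bar y_i$ minimizes $f_i(\bar x,\cdot,\bar y_{-i})$ over $Y_i(\bar x,\bar y_{-i})$, and since $i$ was arbitrary, $\bar y\in GNEP(\bar x)$.

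It remains to handle the two coupling-type constraints. Closedness of $X$ in (i) gives $\bar x\in X$. For the cardinality constraint, I would use that the $\ell_0$-pseudonorm is itself lower semicontinuous: if $\bar y_j\neq 0$ then $y^n_j\to\bar y_j$ forces $y^n_j\neq 0$ for all large $n$, so the support of $\bar y$ is eventually contained in the support of $y^n$, giving $\norm{\bar y}\leq\liminf_n\norm{y^n}\leq K$. Hence $(\bar x,\bar y)\in\mathcal{F}$, and combined with the lower semicontinuity of $\theta$ this shows $(\bar x,\bar y)$ solves \eqref{eq:UpperLevel-Card}. I expect the delicate point to be precisely the optimality half of the $GNEP$ closedness: it is exactly there that lower semicontinuity of the constraint maps (needed to build the recovery sequences $z^n$) cannot be dropped, which is why (iv) asks for both a closed graph and lower semicontinuity rather than mere closedness.
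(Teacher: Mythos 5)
Your proposal is correct and takes essentially the same approach as the paper: the technical heart---closedness of $\gph(GNEP)$, obtained from the closed graph of each $Y_i$ (feasibility of the limit point) together with lower semicontinuity of $Y_i$ and continuity of $f_i$ (recovery sequences for optimality)---is exactly the paper's argument, borrowed from \cite{AusselSvensson2018Some}. The only presentational differences are that you run the direct method on a minimizing sequence where the paper proves compactness of $\mathcal{F}$ and invokes Weierstrass, and that you deduce closedness of the cardinality constraint from sequential lower semicontinuity of $\norm{\cdot}$ where the paper writes $\{(x,y)\,:\,x\in X,\ \norm{y}\leq K\}$ as a finite union of closed sets; both pairs of arguments are interchangeable.
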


\begin{proof}
    Let us assume that \eqref{eq:UpperLevel-Card}  is feasible, that is,  that the feasible set of the leader's problem
    \begin{equation}\label{F_feasible}
       \mathcal{F}:=\{(x,y) \in X\times\DA{\mathbb R^q} \;\mid\; \norm{y} \leq K,\; y \in GNEP(x) \}
    \end{equation}
    is nonempty.  We only need to show that in this case, there is a solution for \eqref{eq:UpperLevel-Card}. \DA{As an intermediate step}, we will first prove that the set-valued map GNEP has closed graph, thus defining a closed constraint set for the leader. To do so, we follow exactly the same technique as in \cite{AusselSvensson2018Some}, and the development \DA{are included} for the sake of completeness: Let us observe that we can write 
    \[
    GNEP(x) = \bigcap\limits_{i=1}^{M} S_{i}(x)
    \]
    with 
    \[
    S_i(x) := \big\{(y_i,y_{-i})  \;\mid\; y_i\in \text{argmin}_{z} \{f_i (x,z,y_{-i}) \;\mid\; z_i \in Y_i (x,y_{-i})\} \big\}.
    \]
    Thus it is sufficient to prove that each of the maps $S_i:X\tto \R^{q}$ have closed graph. Let us
    fix $i\in [M]$ and take sequences $(x_k)_k$  in $\mathbb R^p$ and $(y_k)_k$ in $\mathbb R^q$ converging respectively to $x$ and $y$, and such that $y_k  \in S_i(x_k)$ for all $k \in \mathbb N$. We want to prove that $y \in S_i(x)$. Note that
    \[
    (x_k,y_k) \in \gph S_i \implies y_{i,k}\in Y_{i}(x_k,y_{-i,k})\implies (x_k,y_k) \in \gph Y_i,
    \]
    and thus, since $Y_i$ has closed graph, we get that $y_i \in Y_i(x,y_{-i})$. Take $z_i \in  Y_i(x,y_{-i})$. By lower semicontinuity of the set-valued map $Y_i$, we know that \DAs{that} there exists $z_{i,k} \in Y_i(x_k,y_{-i,k})$ such that $z_{i,k} \rightarrow z_i$.
    Since $y_k \in S_i(x_k)$ then
    $$f_i(x_k,y_{i,k},y_{-i,k})\leq f_i(x_k,z_{i,k},y_{-i,k}), \forall k \in \mathbb N.$$
    Taking limit as $k\to\infty$,  continuity yields $f_i(x,y_{i},y_{-i})\leq f_i(x,z_{i},y_{-i}).$ Since $z_{i}$ was arbitrarily chosen from $Y_i(x, y_{-i})$, we conclude that $ y \in S_i(x)$. Thus $S_i(x)$ is closed and hence, GNEP has closed graph.
    
    Observe also that $GNEP$ is also uniformly bounded, since 
    \[
    GNEP(x) = \bigcap_{i=1}^{M}S_i(x)\subset \bigcap_{i=1}^M\{ y\in \mathbb R^{q}~:~y_i \in Y_{i}(x,y_{-i})\},
    \]
    and the right-hand set is uniformly bounded, thanks to hypothesis $(iii)$. Finally, note that
    \[
    \{(x,y)~:~x\in X,\, \norm{y}\leq K \} = \bigcup_{\tiny\begin{array}{c}J\subset [q],\\|J| = q-K\end{array}} \{(x,y)~:~x\in X,\, y_k = 0, \forall k\in J \}.
    \]
    Thus, the \DA{set} $\{(x,y)~:~x\in X,\, \norm{y}\leq K \}$ is closed as the finite union of closed sets, and so we get that the set
    \[
    \mathcal{F} = \{(x,y)~:~x\in X,\, \norm{y}\leq K \}\cap \mathrm{gph}(GNEP)
    \]
    is compact. Since the objective function $\theta$ is lower semicontinuous, it follows that the optimization problem of the leader in \eqref{eq:UpperLevel-Card} has a solution, by a mild application of Weierstrass theorem.\qed
\end{proof}

As usual, if the objective of the leader is coercive (that is, if $\theta(x,y) \to +\infty$ as $\|(x,y)\|\to +\infty$), then we can remove hypothesis $(iii)$ from Theorem \ref{thm:ExistenceUpperLevel}, and obtain the same result. Note also that, Theorem \ref{thm:ExistenceUpperLevel} coincides with \cite[Theorem 3.1]{AusselSvensson2018Some} in the case of trivial cardinality constraints (that is, $K = q$), and so, that is why the proof follows the same strategy.\\

An important feature of this result is that for many optimization problems, infeasibility can be checked numerically. Thus, if one can reformulate the SLMF game \DA{into} a single-level optimization problem, classic ready-to-go algorithms should be able to \DAs{to} decide infeasibility or to provide a (global/local) solution.  This property allows us to skip the step of checking existence, and pass directly to computation: either we will find the solution or we will get a certificate of infeasibility.

\subsection{Reformulations}\label{subsec:UpperLevel-Reformulations}

In this subsection, we consider that $GNEP(x)$ always stands for the solution set of the followers' equilibrium problem, as given in \eqref{eq:UpperLevel-Card}. Thus, we will only write the leader's problem, where the constraint $y\in GNEP(x)$ captures the interaction with the followers, as in \eqref{eq:GNEP-def}.\\

Now, following \cite{BurdakovKanzowSchwartz2016,CervinkaKanzowSchwartz2016}, we can rewrite the cardinality constraint of \eqref{eq:UpperLevel-Card}, by \DA{introducing} a new variable $u\in \R^{q}$ and using \eqref{eq:KeyReformulation} as follows:

\begin{equation}\label{eq:UpperLevel-Ref}
    \begin{array}{cl}
        \displaystyle\min_{x,y,u} & \theta(x,y)\\
        \text{s.t} & \left\{\begin{array}{l}      
            x \in X,\\
            y \in GNEP(x)\\
            \mathds{1}^\top u \geq q-K\\
            u\odot y = 0,\,\, u \in [0,1]^{q}\\
        \end{array}\right.
    \end{array}
\end{equation}

This reformulation is in fact equivalent (in the sense of global minimizers) to \eqref{eq:UpperLevel-Card}, as it is established in the following proposition. The proof is a direct adaptation of the proofs of \cite[Theorem 3.1, Theorem 3.4]{BurdakovKanzowSchwartz2016} \DA{and is mitted}. 

\begin{proposition}\label{prop:Equivalence-Card+Ref}    
    A vector $(x,y)\in \R^p\times\R^q$ is a feasible point (respectively, a global solution) for \eqref{eq:UpperLevel-Card} if and only if there exists a vector $u\in \R^q$ such that $(x,y,u)$ is a feasible point (respectively, a global solution) for \eqref{eq:UpperLevel-Ref}.
    \smallskip	
    Moreover, if $(x,y) \in \mathbb R^p \times \mathbb R^q$ is a local solution of \eqref{eq:UpperLevel-Card}, then there exists a vector $u \in \mathbb R^q$ such that the vector $(x,y,u)$ is also a local solution of \eqref{eq:UpperLevel-Ref}.
\end{proposition}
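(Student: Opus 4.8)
The plan is to derive all three equivalences from the single engine \eqref{eq:KeyReformulation}, treating the constraint $y\in GNEP(x)$ as an abstract constraint common to both problems. I would introduce the feasible sets
\[
\mathcal{F}_P = \{(x,y)~:~x\in X,\ \norm{y}\leq K,\ y\in GNEP(x)\}
\]
of \eqref{eq:UpperLevel-Card} and
\[
\mathcal{F}_R = \{(x,y,u)~:~x\in X,\ y\in GNEP(x),\ \ind^\top u\geq q-K,\ u\odot y = 0,\ u\in[0,1]^q\}
\]
of \eqref{eq:UpperLevel-Ref}, together with the canonical projection $\pi(x,y,u)=(x,y)$, which is nonexpansive. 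The objective of \eqref{eq:UpperLevel-Ref} is then exactly $\theta\circ\pi$, i.e.\ it does not see the variable $u$; this is the structural fact that makes the whole statement go through.

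First I would settle feasibility. The constraints $x\in X$ and $y\in GNEP(x)$ appear identically in both problems, so the only difference lies in the cardinality block. Applying \eqref{eq:KeyReformulation} with $z=y$ and $n=q$ shows that $\norm{y}\leq K$ holds if and only if there exists $u\in[0,1]^q$ with $\ind^\top u\geq q-K$ and $u\odot y=0$. Hence $(x,y)\in\mathcal{F}_P$ if and only if $(x,y,u)\in\mathcal{F}_R$ for some $u$; equivalently $\pi(\mathcal{F}_R)=\mathcal{F}_P$, which is precisely the feasibility claim.

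The global statement would then follow from $\pi(\mathcal{F}_R)=\mathcal{F}_P$ together with the $u$-independence of the objective: the two problems share the same optimal value. If $(x,y)$ solves \eqref{eq:UpperLevel-Card} globally, any feasibility witness $u$ gives $(x,y,u)\in\mathcal{F}_R$ attaining that value, hence a global solution of \eqref{eq:UpperLevel-Ref}; conversely, if $(x,y,u)$ solves \eqref{eq:UpperLevel-Ref} globally, then $(x,y)=\pi(x,y,u)\in\mathcal{F}_P$ attains it and solves \eqref{eq:UpperLevel-Card}.

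The local statement is where I would be most careful, and nonexpansiveness of $\pi$ is the crucial ingredient. Starting from a local solution $(x^\ast,y^\ast)$ of \eqref{eq:UpperLevel-Card}, with $\theta(x,y)\geq\theta(x^\ast,y^\ast)$ on the $\varepsilon$-ball of $(x^\ast,y^\ast)$ intersected with $\mathcal{F}_P$, I would pick any witness $u^\ast$ (e.g.\ $u^\ast_k=1$ where $y^\ast_k=0$ and $u^\ast_k=0$ otherwise). Then for any $(x,y,u)\in\mathcal{F}_R$ inside the $\varepsilon$-ball of $(x^\ast,y^\ast,u^\ast)$, nonexpansiveness gives $\|(x,y)-(x^\ast,y^\ast)\|<\varepsilon$ while $(x,y)\in\mathcal{F}_P$, so $\theta(x,y)\geq\theta(x^\ast,y^\ast)$ and $(x^\ast,y^\ast,u^\ast)$ is a local solution of \eqref{eq:UpperLevel-Ref}. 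The point I expect to be the real obstacle is not the computation but the scope of the claim: any witness $u^\ast$ works for this direction, yet the converse genuinely fails, since lifting to the complementarity variables typically manufactures spurious local minima of \eqref{eq:UpperLevel-Ref} that do not descend to \eqref{eq:UpperLevel-Card}. This is exactly why only the one-sided local implication is asserted, and why \cite{BurdakovKanzowSchwartz2016} introduce tailored stationarity concepts instead of relying on a reverse implication.
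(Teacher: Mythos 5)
Your proof is correct and is essentially the argument the paper intends: the paper omits the proof of Proposition \ref{prop:Equivalence-Card+Ref}, describing it as a direct adaptation of Theorems 3.1 and 3.4 of \cite{BurdakovKanzowSchwartz2016}, and your three steps --- feasibility via \eqref{eq:KeyReformulation} applied with $z=y$, global equivalence from the fact that the objective of \eqref{eq:UpperLevel-Ref} does not depend on $u$, and the one-sided local claim via nonexpansiveness of the projection $(x,y,u)\mapsto(x,y)$ with an arbitrary feasibility witness $u^\ast$ --- constitute exactly that adaptation. Your closing remark that the converse local implication genuinely fails is also consistent with the paper, which makes the same point and illustrates it with the example given immediately after the proposition.
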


\begin{remark}\label{rem:binary-u}
Note that one can always choose $u\in \{0,1\}^q$ instead of $u\in[0,1]^q$ in \eqref{eq:UpperLevel-Ref}. Indeed, if $(x,y,u)$ is a feasible point (respectively, a global solution), one can define $\bar{u}\in\{0,1\}^q$ given by
\begin{equation}\label{eq:u_prob_L}
        \forall k \in[q],\quad \bar{u}_k=
        \begin{cases}
            0 & \text{if } u_k = 0,\\
            1 & \text{if } u_k \neq 0.
        \end{cases}
    \end{equation}
Then, it is straightforward to verify that $(x,y,\bar{u})$ is still feasible (respectively, a global solution) for the problem.
\end{remark}

Proposition \ref{prop:Equivalence-Card+Ref} is tight, in the sense that the converse implication for local minima doesn't hold. 
The obstruction is that the variable $u$ in the reformulation \eqref{eq:UpperLevel-Ref} acts as a multiplier inducing partitions of the space: while the overall feasible set might be connected, it is possible to locally separate a point where some coordinate $u_k$ is strictly positive from those where it is zero. Similar issues have been identified in the classic MPCC reformulation of bilevel programming problems (see, e.g., \cite[Example 3.4]{DempeDutta2012}). The following example illustrates this fact.

\begin{example} Consider the problem 
    \[
    \begin{array}{cl}
        \displaystyle\min_{x,y} & y_1 - 2y_2\\
        \text{s.t} & \left\{\begin{array}{l}      
            x \in [0,1],\\
            y \in GNEP(x)\\
            \norm{y}\leq 1,
        \end{array}\right.
    \end{array}
    \]
and assume that for every 	$x\in [0,1]$, $GNEP(x) = [0,1]^2$. For any $x\in [0,1]$, the point $(x,0,1)$ is a global minimizer, while clearly $(x,0,0)$ is not a local optimum. Now, the reformulation \eqref{eq:UpperLevel-Ref} of this problem is given by
\[
\begin{array}{cl}
    \displaystyle\min_{x,y,u} & y_1 - 2y_2\\
    \text{s.t} & \left\{\begin{array}{l}      
        x \in [0,1],\\
        y \in GNEP(x)\\
        u_1+u_2 \geq 1\\
        u\odot y = 0,\,\, u \in [0,1]^{2}.\\
    \end{array}\right.
\end{array}
\]
Here, for any $x^*\in [0,1]$, the point $(x^*, y^*,u^*)$ with $y^* = (0,0)$ and $u^* = (0,1)$ is a local optimum. Indeed, fix \DAs{any} $r=1/2$ and pick any $(x,y,u)\in B_{r}(x, y^*,u^*)$ that is feasible for the reformulated problem. Then, necessarily $u_2 \geq 1/2$ and so $y_2 = 0$. Then,
\[
y_1 - 2y_2 = y_1 \geq 0 = y_1^* - 2y_2^*,
\] 
and the conclusion follows.\hfill$\Diamond$
\end{example}

\bigskip

The observation that the variable $u$ of \eqref{eq:UpperLevel-Ref} acts as a multiplier similar to the classic MPCC reformulation of SLMF games is in fact rather powerful. Indeed, we can profit from this observation to produce a second single-level reformulation of \eqref{eq:UpperLevel-Card} without cardinality constraints, which still is a mathematical programming problem with complementarity constraint. We will just replace the lower-level (generalized) Nash equilibrium problem of \eqref{eq:UpperLevel-Ref} by the concatenation of the associated parametric KKT conditions of each of the followers\DA{'s problem}. \\

We can consider the Lagrangian function for the $i$th follower as
\[
L_i(x, y, u,\lambda_i) := f_i(x,y,u) +\sum_{k=1}^{m_i}\lambda_{ik} g_{ik}(x,y,u),
\]
where $\lambda_i = (\lambda_{i,1},\ldots,\lambda_{i,m_i})$ stands for the vector of Lagrange multipliers. The MPCC reformulation for problem \eqref{eq:UpperLevel-Ref} is:

\begin{equation}\label{eq:UpperLevel-MPCC}
    \begin{array}{cl}
        \displaystyle\min_{x,y,u,\lambda} & \theta(x,y)\\
        \text{s.t.}  &\left\{\begin{array}{l}
            x\in X,\\
            \ind^\top u \geq q-K,\\
            u\odot y = 0,\, u \in [0,1]^q,\\
            \forall i\in[M],\, \left\{
            \begin{array}{l}   
                \nabla_{y_i} f_i(x,y) + \sum_{k=1}^{m_i}\lambda_{ik} \nabla_{y_i}g_{ik}(x,y) = 0,\\
                g_{i}(x,y)\leq 0,\\  
                \lambda_{i}\odot g_{i}(x,y) = 0, \\
                \lambda_{i}\geq 0. \\
            \end{array}
            \right.
        \end{array} \right.
    \end{array}
\end{equation}

Note that, since the lower-level problems of \eqref{eq:UpperLevel-Ref} are not affected by the variable $u$ (which is used only to rewrite the cardinality constraint), the same KKT equations from \eqref{eq:UpperLevel-MPCC} are used to provide a MPCC reformulation of \eqref{eq:UpperLevel-Card} maintaining \DA{the} cardinality constraint. However, we focus our attention only in \eqref{eq:UpperLevel-Ref} and  \eqref{eq:UpperLevel-MPCC}, since our main goal is to avoid cardinality constraints, in the spirit of \cite{BurdakovKanzowSchwartz2016,CervinkaKanzowSchwartz2016}.

We finish this subsection with the next theorem, which is one of our main results. It provides the equivalence of all problems we have written so far, in the sense of global solutions. Even though the proof follows standard arguments, we include it here for the sake of completeness.

\begin{theorem}\label{thm:Equiv_MPCC_Mixed} Consider problem \eqref{eq:UpperLevel-Card} and assume the following hypotheses:
    \begin{itemize}
        \item[($H_1$)] (Follower Differentiability) For any follower $i\in [M]$ and any $(x,y_{-i})\in X\times \mathbb R^{q_{-i}},$ $f_i(x,\cdot,y_{-i})$ and $g_i(x,\cdot,y_{-i})$ are differentiable;
        \item[($H_2$)] (Follower Convexity) For any follower $i\in[M]$ and any $(x,y_{-i})\in X\times \mathbb{R}^{q_{-i}},$ $f_i(x,\cdot,y_{-i})$ is convex, and the components of $g_i(x,\cdot,y_{-i})$ are quasiconvex functions;
        \item[($H_3$)] (Guignard's CQ) for each leader's strategy $x \in X$, for each follower $i \in [M]$, and for each joint strategy $y = (y_i, y_{-i})$ which is feasible for all followers, equation \eqref{eq:CQ} holds for $\Omega = Y_i(x,y_{-i})$ at $y_i$, with its representation \eqref{eq:follower-feasibilityMap}.  
    \end{itemize}
    Then, for any $(\bar{x},\bar{y})\in\R^{p}\times\R^{q}$, the following assertions are equivalent:
    \begin{itemize}
        \item[(i)] $(\bar{x}, \bar{y})$ is a feasible point (respectively, a global solution) of \eqref{eq:UpperLevel-Card}.
        \item[(ii)] $\exists \bar{u}\in [0,1]^q, (\bar{x}, \bar{y}, \bar{u})$ is a feasible point (respectively, a global solution) of \eqref{eq:UpperLevel-Ref}.
        \item[(iii)] $\exists \bar{\lambda}\in \mathbb R^m, \exists \bar{u}\in [0,1]^q, (\bar{x}, \bar{y}, \bar{u}, \bar{\lambda})$ is a feasible point (respectively, a global solution) of \eqref{eq:UpperLevel-MPCC}.
    \end{itemize}
\end{theorem}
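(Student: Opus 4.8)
The plan is to observe that the equivalence (i)$\iff$(ii) is already granted by Proposition~\ref{prop:Equivalence-Card+Ref}, so the entire content of the theorem reduces to establishing (ii)$\iff$(iii). Since the objective $\theta$, the leader's constraint $x\in X$, and the cardinality-encoding block $\ind^\top u \geq q-K$, $u\odot y = 0$, $u\in[0,1]^q$ are \emph{identical} in \eqref{eq:UpperLevel-Ref} and \eqref{eq:UpperLevel-MPCC}, the only difference between the two problems is that $y\in GNEP(x)$ is replaced by the concatenated KKT system. Hence it suffices to prove the following pointwise statement: for every $(x,y)\in\R^p\times\R^q$, one has $y\in GNEP(x)$ \emph{if and only if} there exists $\lambda\in\R^m$ such that, for all $i\in[M]$,
\begin{equation*}
\nabla_{y_i} f_i(x,y) + \sum_{k=1}^{m_i}\lambda_{ik}\nabla_{y_i} g_{ik}(x,y) = 0,\quad g_i(x,y)\leq 0,\quad \lambda_i\odot g_i(x,y)=0,\quad \lambda_i\geq 0.
\end{equation*}
Once this is in hand, feasibility transfers immediately; and because $\theta$ depends only on $(x,y)$ (neither on $u$ nor on $\lambda$), the projections onto the $(x,y)$-coordinates of the feasible sets of \eqref{eq:UpperLevel-Ref} and \eqref{eq:UpperLevel-MPCC} coincide, so a routine lift/project argument identifies optimal values and global minimizers.

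For the pointwise equivalence I would argue follower by follower. Fix $i\in[M]$ and $(x,y_{-i})$. By \eqref{eq:GNEP-def}, membership $y\in GNEP(x)$ means that $y_i$ is a global minimizer of $\min\{f_i(x,z,y_{-i}) : z\in Y_i(x,y_{-i})\}$. Under $(H_2)$ this is a convex program: $f_i(x,\cdot,y_{-i})$ is convex and, since the components of $g_i(x,\cdot,y_{-i})$ are quasiconvex, the description \eqref{eq:follower-feasibilityMap} makes $Y_i(x,y_{-i})$ a closed convex set. The standard first-order characterization for convex minimization then gives that $y_i$ is a global minimizer if and only if
\begin{equation*}
-\nabla_{y_i} f_i(x,y)\in N_{Y_i(x,y_{-i})}(y_i).
\end{equation*}
Invoking Guignard's CQ $(H_3)$, which by hypothesis holds precisely at feasible joint strategies, the normal cone on the right equals the finitely generated cone $\{\sum_{k}\lambda_{ik}\nabla_{y_i} g_{ik}(x,y) : \lambda_i\odot g_i(x,y)=0,\ \lambda_i\geq 0\}$ from \eqref{eq:CQ}. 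Substituting this representation turns the stationarity inclusion into the asserted KKT equations; intersecting over $i\in[M]$ then yields the full statement.

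It is worth separating the two implications, as they do not use the hypotheses symmetrically. The direction $y\in GNEP(x)\Rightarrow$ (KKT) is where Guignard's CQ $(H_3)$ is genuinely needed, as it is what lets us \emph{extract} the multipliers $\lambda_i$ from the geometric stationarity condition. Conversely, (KKT)$\Rightarrow y\in GNEP(x)$ requires no constraint qualification: the inclusion $\{\sum_k \lambda_{ik}\nabla_{y_i} g_{ik}(x,y) : \lambda_i\odot g_i(x,y)=0,\ \lambda_i\geq 0\}\subseteq N_{Y_i(x,y_{-i})}(y_i)$ always holds under $(H_1)$--$(H_2)$, because differentiable quasiconvexity yields $\langle \nabla_{y_i} g_{ik}(x,y), z-y_i\rangle\leq 0$ for every feasible $z$ and every active index $k$. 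Thus the KKT gradient equation forces $-\nabla_{y_i} f_i(x,y)\in N_{Y_i(x,y_{-i})}(y_i)$, and the \emph{convexity} of $f_i$ in $(H_2)$ promotes this stationarity to global optimality of $y_i$.

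I expect the only delicate point to be the careful handling of Guignard's CQ: one must check that the representation \eqref{eq:CQ} is applied at the right points (feasible joint strategies, as assumed in $(H_3)$), and separately verify the always-valid ``easy'' inclusion of the generated cone into the normal cone for merely quasiconvex constraints, which is exactly what decouples the two implications. The remaining passage from the pointwise equivalence to the statements about feasible points and global solutions is routine, resting on the fact that both reformulations share the same objective and the same cardinality/complementarity block in $(x,y,u)$.
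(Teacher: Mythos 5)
Your proposal is correct and follows essentially the same route as the paper's proof: equivalence (i)$\iff$(ii) is delegated to Proposition~\ref{prop:Equivalence-Card+Ref}, and (ii)$\iff$(iii) is reduced to the pointwise equivalence between $y\in GNEP(x)$ and the existence of KKT multipliers, proved via the characterization $-\nabla_{y_i}f_i(x,y)\in N_{Y_i(x,y_{-i})}(y_i)$ together with Guignard's CQ in one direction, and the quasiconvexity-based inclusion of the generated cone into the normal cone plus convexity of $f_i$ in the other. If anything, your version is slightly more careful than the paper's: you correctly restrict the inequality $\langle\nabla_{y_i}g_{ik}(x,y),z-y_i\rangle\leq 0$ to \emph{active} indices $k$ (which is all that complementarity requires), whereas the paper asserts it for every $k$, a claim that can fail for inactive quasiconvex constraints.
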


\begin{proof}
    The equivalence between $(i)$ and $(ii)$ is given in Proposition \ref{prop:Equivalence-Card+Ref}. To show that $(ii)\iff (iii)$, it is enough to show that
    \[
    (x,y,u)\text{ is feasible for \eqref{eq:UpperLevel-Ref}}\iff \exists \lambda\in\R^m, (x,y,u,\lambda)\text{ is feasible for \eqref{eq:UpperLevel-MPCC}}.
    \]
    Suppose first that $(x,y,u)$ is feasible for \eqref{eq:UpperLevel-Ref}. Then, $y\in GNEP(x)$ and so we have that for every $i\in [M]$, $y_i \in \argmin_z\{ f_i(x,z,y_{-i})\mid z\in Y_i(x,y_{-i}) \}$. Since $Y_i(x,y_{-i})$ is convex, we get that 
    \[
    -\nabla_{y_i} f_i(x,y_i,y_{-i}) \in N_{Y_i(x,y_{-i})}(y_i).
    \]
    Then, hypothesis $(H_3)$ allows us to apply formula \eqref{eq:CQ} to $N_{Y_i(x,y_{-i})}(y_i)$, ensuring that there exists a multiplier $\lambda_i \in \R^{q_i}$ satisfying the Karush-Kuhn-Tucker conditions for the problem of the $i$th follower given by $(x,y_{-i})$, at $y_i$. Then, by writing $\lambda = (\lambda_1,\ldots, \lambda_M)$, we conclude that $(x,y,u,\lambda)$ is feasible for \eqref{eq:UpperLevel-MPCC}. 
    
    For the converse, suppose now that $(x,y,u,\lambda)$ is feasible for \eqref{eq:UpperLevel-MPCC}. Then, for $i\in [M]$, $\lambda_i \in \R^{q_i}$ is a multiplier satisfying the Karush-Kuhn-Tucker conditions for the problem of the $i$th follower given by $(x,y_{-i})$, at $y_i$. Let $z\in Y_i(x,y_i)$ and fix $k\in \{1,\ldots,m_i\}$. Since $g_{ik}(x,\cdot,y_{-i})$ is quasiconvex, we have that the segment $[y_i,z]$ is contained in the sublevel set $[g_{ik}(x,\cdot,y_{-i})\leq 0]$.  Then,
    \[
    \langle \nabla_{y_i} g_{ik}(x,y_i, y_{-i}),z-y_i\rangle = \lim_{t\to 0}\frac{g_{ik}(x,y_i + t(z-y_i), y_{-i}) - g_{ik}(x,y_i, y_{-i})}{t} \leq 0. 
    \]
    Since $z\in Y_i(x,y_{-i})$ and $k\in\{1,\ldots,m_i\}$ are arbitrary, we deduce that
    \[
    -\nabla_{y_i} f_i(x,y_i,y_{-i}) = \sum_{k=1}^{m_i}\lambda_{ik}\nabla_{y_i} g_{ik}(x,y_i,y_{-i}) \in N_{Y_i(x,y_{-i})}(y_i). 
    \]
    Thus, convexity of $f_i$ entails that $y_i \in \argmin_z\{ f_i(x,z,y_{-i})\mid z\in Y_i(x,y_{-i}) \}$. Since this holds for every $i\in [M]$, we conclude that $y\in GNEP(x)$, and so $(x,y,u)$ is feasible for \eqref{eq:UpperLevel-Ref}, finishing the proof.\qed
\end{proof}

\section{SLMF games with mixed cardinality constraints \label{sec:mixed}}
	
As illustrated in the Example \ref{example_infeasibility}, problems of the form \eqref{eq:UpperLevel-Card} can become infeasible due to the fact that the leader does not have control over the decisions of the followers. 
In this section, however, we propose a mixed formulation considering the reformulation \eqref{eq:UpperLevel-Ref}, where the vector $u$ is used to represent the cardinality constraint partially distributed between the leader and the followers. Specifically, we propose to consider the following formulation:

\begin{equation}\label{eq:Mixed-Levels-Card}
    \begin{array}{|c|c|}
        \hline
        \begin{array}{cl}
            \displaystyle\min_{x,u,y}   & \theta(x,y)\\
            \text{s.t.} & \left\{
            \begin{array}{l}
                x \in X,\\
                y \in GNEP(x,u)\\
                \ind^\top u \geq q-K\\
                u \in [0,1]^q                  
            \end{array}\right.
        \end{array}
        &
        \begin{array}{cl}
            \displaystyle\min_{y_i}   &f_i(x,y_i,y_{-i})\\
            \text{s.t.} & g_{i}(x,y_i, y_{-i})\leq 0,\\
            & u_i\odot y_i = 0.
        \end{array}\\
        \hline
        \text{Leader}&\text{$i$th Follower}\\
        \hline
    \end{array}
\end{equation}

\medskip
 Recall that, according to Remark \ref{rem:Index}, the vector $u_i\in\R^{q_i}$ stands for the components of vector $u\in\R^q$ associated to the $i$th follower.  Consistently, the vector $u\in \R^{q}$ is written as $u=(u_i\,:\, i\in [M]) \in \prod_{i=1}^M\R^{q_i}$, since each part $u_i$ acts only on the $i$th follower's problem. Now, $GNEP(x,u)$ stands for the solution set of the new equilibrium problem of the followers. \\

Problem \eqref{eq:Mixed-Levels-Card} can be interpreted as follows: the leader is given a new interdiction variable $u\in[0,1]^q$, through which she can force any subset of followers' variables to be zero. The cardinality constraint is then ensured by demanding the leader to force at least $q-K$ followers' variables to be zero, or equivalently, to allow at most $K$ followers' variables to be nonzero. After the interdiction is decided by the leader, the followers solve their new equilibrium problem, respecting that any interdicted variable must be zero. 

	
\subsection{Existence of solutions and MPCC reformulation}

The main difficulty in order to be able to prove the existence of solutions for the mixed cardinality problem \eqref{eq:Mixed-Levels-Card}, is that due to the new constraints $u_i\odot y_i = 0$, the constraint maps of the followers are no longer lower semicontinuity. To solve this issue, we need to restrict ourselves to a setting where this difficulty doesn't occur.

Let us consider the case where the constraint functions $g_i(x, y_i,y_{-i}),$ for $i\in[M]$ can be written \DA{in the following separable form}
\[
g_i(x, y_i,y_{-i}) = \hat{g}_{i}(y_i) - \varphi_i(x,y_{-i}),
\]
where $\hat{g}_{i}: \R^{q_i}\to \R^{m_i}$ is componentwise convex, weakly analytic and continuous, and $\varphi_i:\R^p\times\R^{q_{-i}}\to\R^{m_i}$ is continuous. 

\begin{theorem}\label{thm:existence-Mixed}
    
    Consider problem \eqref{eq:Mixed-Levels-Card} and assume that
    
    \begin{itemize}
        \item[(i)] $\theta$ is lower semicontinuous and $X$ is closed;
        \item[(ii)] for each $i\in [M]$, $f_i$ is continuous;
        \item[(iii)]   for each $i\in [M]$, the images of $Y_i$,  are uniformly bounded, and $X$ is bounded;
        \item[(iv)]   for each $i\in [M]$, $g_i(x,y_i,y_{-i}) = \hat g_i(y_i) -\varphi_i(x,y_{-i})$, where $\hat g_i$  is componentwise convex, weakly analytic and continuous, and $\varphi_i$ is continuous. 
    \end{itemize}
Then, either problem \eqref{eq:Mixed-Levels-Card} is infeasible, or it admits a solution.
\end{theorem}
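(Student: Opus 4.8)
\section*{Proof plan}

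The plan is to mirror the compactness-plus-Weierstrass strategy of Theorem \ref{thm:ExistenceUpperLevel}, but first to neutralize the failure of lower semicontinuity caused by the new followers' constraint $u_i\odot y_i = 0$. The decisive reduction is to restrict the interdiction variable $u$ to the binary cube $\{0,1\}^q$. Following the reasoning of Remark \ref{rem:binary-u}, I would first observe that the constraint $u_i\odot y_i = 0$ depends only on the support of $u$: if $u_{i,j}>0$ it forces $y_{i,j}=0$, while if $u_{i,j}=0$ it imposes nothing. Hence, replacing any feasible $u\in[0,1]^q$ by its binarization $\bar u$ (with $\bar u_{i,j}=1$ iff $u_{i,j}\neq 0$) leaves each set $GNEP(x,u)=GNEP(x,\bar u)$ unchanged, preserves the cardinality constraint since $\mathds{1}^\top \bar u \geq \mathds{1}^\top u \geq q-K$, and does not touch the objective $\theta(x,y)$, which is $u$-independent. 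Consequently the mixed problem is equivalent, in value and in global solutions, to its restriction to $u\in\{0,1\}^q$, of which there are only finitely many satisfying $\mathds{1}^\top u\geq q-K$.

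Second, I would fix one such binary $u$ and analyze the resulting followers' equilibrium problem. For fixed $u$, the complementarity constraint collapses to the affine equalities $y_{i,j}=0$ for $j\in\mathrm{supp}(u_i)$. Writing each equality as the pair $y_{i,j}\leq 0$, $-y_{i,j}\leq 0$, the modified feasibility map
\[
\tilde Y_i^{u}(x,y_{-i}) = \{y_i : \hat g_{i,k}(y_i)\leq \varphi_{i,k}(x,y_{-i})\ \forall k,\ y_{i,j}=0\ \forall j\in\mathrm{supp}(u_i)\}
\]
is still of the separable form required by Proposition \ref{prop:lsc-weaklyAnalytic}: the functions $\hat g_{i,k}$ are convex, weakly analytic and continuous by hypothesis $(iv)$, the affine maps $\pm y_{i,j}$ are trivially convex, weakly analytic and continuous, and all right-hand sides ($\varphi_{i,k}$ and the constants $0$) are continuous. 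Therefore $\tilde Y_i^{u}$ is lower semicontinuous; moreover it has closed graph and uniformly bounded images, being a subset of $Y_i$ cut out by continuous constraints. This is exactly the setting in which the closed-graph argument for the maps $S_i$ (and hence for $GNEP(\cdot,u)$) from the proof of Theorem \ref{thm:ExistenceUpperLevel} applies verbatim.

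Third, I would assemble the pieces. For each admissible binary $u$, the closed-graph property of $GNEP(\cdot,u)$ together with the uniform boundedness of its images (from $(iii)$) and the compactness of $X$ (from $(i)$ and $(iii)$) show that the feasible set $\mathcal F_u := \{(x,y): x\in X,\ y\in GNEP(x,u)\}$ is closed and bounded, hence compact, exactly as in Theorem \ref{thm:ExistenceUpperLevel}. If the mixed problem is feasible, then by the binarization step at least one $\mathcal F_u$ is nonempty; for each such $u$, lower semicontinuity of $\theta$ and compactness of $\mathcal F_u$ yield, by the Weierstrass theorem, a minimizer of $\theta$ over $\mathcal F_u$. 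Taking the best of these finitely many minimizers produces a global solution of the mixed problem.

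The main obstacle is precisely the one flagged before the statement: for a genuinely continuous $u$ the followers' feasibility map is not lower semicontinuous, because as some $u_{i,j}$ crosses zero the constraint $y_{i,j}=0$ appears or disappears discontinuously, so the direct closed-graph argument of Theorem \ref{thm:ExistenceUpperLevel} cannot be run. The whole weight of the proof rests on the binarization reduction, which freezes the support of $u$ and thereby converts the troublesome complementarity constraint into fixed affine equalities; only then does the separable weakly-analytic structure of hypothesis $(iv)$ let Proposition \ref{prop:lsc-weaklyAnalytic} restore lower semicontinuity. A secondary point to handle carefully is verifying that binarization genuinely preserves global optimality — the same optimal value and the same attainable objective values — so that a solution of the finite binary problem is indeed a solution of the original mixed problem.
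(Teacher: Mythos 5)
Your proposal is correct and follows essentially the same route as the paper's proof: binarize $u$ via the support map (the paper's equation \eqref{eq:Normalization-u}), note that this changes neither the followers' feasible sets nor the leader's objective, then for each of the finitely many binary $u$ rewrite the frozen complementarity constraints as $\pm$ affine inequalities so that Proposition \ref{prop:lsc-weaklyAnalytic} yields lower semicontinuity of the restricted feasibility maps $Z_i^u$, and conclude by applying the existence argument of Theorem \ref{thm:ExistenceUpperLevel} to each subproblem $(P_u)$ and selecting the best solution. The only cosmetic difference is that the paper phrases the reduction through the optimal values $v(P)=\min_u v(P_u)$ and invokes the cited existence theorem, whereas you re-run the closed-graph-plus-Weierstrass argument directly; the substance is identical.
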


\begin{proof}
    
    Again, we will suppose that the feasible set of the leader's problem
            \begin{equation}\label{F_feasible_mixed}
                    \mathcal{F}:=\{(x,u,y) \in X\times[0,1]^q \times\mathbb R^q\;\mid\; \ind^\top u \geq q-K,\; y \in GNEP(x,u) \}
                \end{equation}
    is nonempty. So we only need to show that  \eqref{eq:Mixed-Levels-Card} admits a solution in this case.\\
    
    To do so, let $(x,u,y) \in X\times[0,1]^q\times \R^{q}$ be a feasible point for  \eqref{eq:Mixed-Levels-Card}. Note that taking $\bar{u}$ as
    \begin{equation}\label{eq:Normalization-u}
        \forall k\in[q],\,\bar{u}_k:=
        \begin{cases}
            0 & \text{if } u_k = 0\\
            1 & \text{if } u_k >0.  
        \end{cases}
    \end{equation}
    then $(x,\bar{u}, y)$ is feasible for the problem \eqref{eq:Mixed-Levels-Card} and the objective value for the leader is the same, since it doesn't depend on $u$. 
    
    Fixing  $u\in  \{0,1\}^{q} = \prod_{i=1}^{M} \{0,1\}^{q_i}$, we consider for every $i\in [M]$ the set 
    \begin{equation}\label{Set_linear_Zi_Mix}
        Z_i^u(x,y_{-i}) = Y_i(x,y_{-i}) \cap \{y_{i}\in \mathbb R^{m_i}\mid u_i\odot y_i=0\},
    \end{equation}
    and we define
    $Z^u(x,y) = \prod_{i=1}^{M} Z_i^u(x,y_{-i}).$ With these definitions, for every $u\in \{0,1\}^q$, we are going to define the following problem $(P_{u})$:

    \begin{equation}\label{eq:Mixed-Levels-Card_Zu}
        \begin{array}{|c|c|}
            \hline
            \begin{array}{cl}
                \displaystyle\min_{x,y}   & \theta(x,y)\\
                \text{s.t.} & \left\{
                \begin{array}{l}
                    x \in X_u = X\cap\text{dom}(Z^u),\\
                    y \in GNEP_u(x)               
                \end{array}\right.
            \end{array}
            &
            \begin{array}{cl}
                \displaystyle\min_{y_i}   &f_i(x,y_i,y_{-i})\\
                \text{s.t.} & y_i \in Z_i^u(x,y_{-i}).
            \end{array}\\
            \hline
            \text{Leader - ($P_u$)}&\text{$i$th Follower - ($P_u$)}\\
            \hline
        \end{array}
    \end{equation}
    \noindent \DA{where $GNEP_u(x)$ denotes the set of generalized Nash equilibriums for the game between the followers, as defined in \eqref{eq:Mixed-Levels-Card_Zu}, parametrized by $x$ and $u$.}\medskip\par
    Now, we will denote $(P)$ as the  problem \eqref{eq:Mixed-Levels-Card} and we consider\DA{, for any $u\in\{0,1\}^q$,} $v(P)$ and $v(P_u)$ the optimal values of problems $\eqref{eq:Mixed-Levels-Card}$ and \eqref{eq:Mixed-Levels-Card_Zu}, respectively. Here, we consider the convention that the value of an infeasible problem is $+\infty$. We claim that the \DA{value} $v(P)$ is equal to the minimal value among $\{v(P_u)~:~u\in\{0,1\}^q\}$. Indeed, on the one hand, if $(x,u,y)$ is feasible for $(P)$ with $u\in [0,1]^q$, then $(x,y)$ is feasible for $(P_{\bar{u}})$ where $\bar{u}$ is given as in \eqref{eq:Normalization-u}, and both points have the same objective value for their respective problems. So the value of problem $(P)$ satisfies 
    \[
    v(P) \geq \min\{v(P_u) \mid\ u\in \{0,1\}^q\}.
    \]
    
    On the other hand, if $(x,y)$ is feasible for $(P_u)$ for some $u\in\{ 0,1\}^{p}$, then $(x,y,u)$ must be feasible for $(P)$. Indeed, this follows directly by the observation that $y\in GNEP(x,u)$ if and only if $y\in GNEP_u(x)$. Thus,
    \[
    v(P) \leq \min\{v(P_u) \mid\ u\in \{0,1\}^q\},
    \]
    and the claim is proven. Now, in order to prove the existence of solutions for $(P)$ it is enough to show that for every $u\in\{0,1\}^{q}$, one has that $(P_u)$ is either infeasible or it admits a solution.  To do so, it is enough to verify that each problem $(P_u)$ verifies the hypotheses of \cite[Theorem 3.1]{AusselSvensson2018Some}, that is, it verifies the hypotheses $(i)-(iv)$ of Theorem \ref{thm:ExistenceUpperLevel}.
    
     Indeed, fix $u\in \{0,1\}^q$ and assume that \eqref{eq:Mixed-Levels-Card_Zu} is feasible. Trivially, hypotheses $(i)$ and $(ii)$ of Theorem \ref{thm:ExistenceUpperLevel}  hold. Note that, from $(iii)$, we have that the images $Y_i$ are uniformly bounded  and so the images of $Z_i^u$ are uniformly bounded, as well. Thus, $(P_u)$ also verifies hypothesis $(iii)$ of Theorem \ref{thm:ExistenceUpperLevel}. Moreover, we can write
    \[
    Z_i^u(x,y_{-}) := \{ z \: \ h_i(z) \leq \phi(x,y_{-i}) \},
    \]
    with 
     \[
     h_y(z) = \begin{pmatrix}
        \hat{g}_i(z)\\
        u_i\odot z\\
        -u_i\odot z
    \end{pmatrix} \text{ and }\phi(x,y_{-i})= \begin{pmatrix}
    \varphi_i(x,y_{-i})\\
    0\\
    0
    \end{pmatrix}.
    \]
    Since the mappings $y_i\mapsto u_i\odot y_i$ are componentwise linear with respect to $y_i$, we get that hypothesis $(iv)$ entails that $h_i$ is componentwise convex, weakly analytic and continuous and that $\phi$ is continuous. Thus, 
    from  \cite[Theorem 3.3.3]{bank1983nonlinear} (see Proposition \ref{prop:lsc-weaklyAnalytic} above), we have that the set-valued map $Z_i^u$ defined in \eqref{Set_linear_Zi_Mix} is lower semicontinuous and it has closed graph. Therefore, hypothesis $(iv)$ of Theorem \ref{thm:ExistenceUpperLevel} holds.
    
    Now, all the hypotheses of \cite[Theorem 3.1]{AusselSvensson2018Some} are satisfied, thus $(P_u)$ admits a solution. Since problem \eqref{eq:Mixed-Levels-Card} is feasible by hypothesis, at least one of the problems $(P_u)$ must be feasible as well. Then, to determine the solution of the problem \eqref{eq:Mixed-Levels-Card}, it is enough to take the best point $(x^*,y^*,u)$ with $(x^*,y^*)$ solving $(P_u)$, among the $u\in\{0,1\}^{q}$ for which the problem $(P_u)$ is feasible. This \DA{completes} the proof.\qed
\end{proof}

\subsection{Single-level reformulations for the linear lower-level problems}
\DA{Similarly as we did in Subsection \ref{subsec:UpperLevel-Reformulations},  one could consider the MPCC reformulation of Problem \eqref{eq:Mixed-Levels-Card}, and try to replicate the equivalence result given by Theorem \ref{thm:Equiv_MPCC_Mixed}.

The main obstruction to do so is that the constraint qualification \eqref{eq:CQ} is required on the new feasible sets $\{ z~:~ g_i(x,z,y_{-i})\leq 0,\, u_i\odot z=0 \}$, which might be hard to verify, since it is not automatically inherited from the  original constraint sets $Y_i(x,y_{-i}) = \{ z~:~ g_i(x,z,y_{-i})\leq 0\}$. 
However, this problem can be easily solved in the special case where the lower-level is given by linear problems.}

We say that problem \eqref{eq:Mixed-Levels-Card} (or problem \eqref{eq:UpperLevel-Card}) 
has lower-level linear data if
\begin{enumerate}
    \item For each $i\in [M]$, there exist matrices $B_i,C_i,D_i$ of appropriate dimensions and a vector $\gamma_i\in\R^{m_i}$ such that $g_i(x,y_i,y_{-i}) = B_ix + C_iy_i +D_iy_{-i} - \gamma_i$.
    \item  For each $i\in [M]$, there exists a function $\alpha_i: \R^{p}\times \R^{q_{-i}}\to \R^{q_i}$ and a vector $\beta_i\in \R^{q_i}$ such that $f_i(x,y_i,y_{-i}) = \alpha_i(x,y_{-i})^{\top}y_i + \beta_i^{\top}(y_i\odot y_i)$.
\end{enumerate}

Observe that we are admitting the followers' functions to be either linear with respect to $y_i$ (if $\beta_i = 0$) or quadratic, since $\beta_i^{\top}(y_i\odot y_i) = \sum_{j=1}^{q_i}\beta_{ij}y_{ij}^2$. The abuse of the word ``linear'' comes from the fact that the gradient map $y_{i}\mapsto \nabla_{y_i}f(x,y_i,y_{-i})$ will be affine if the map $\alpha_i$ is affine.\\

Note that in this case, the constraint sets $\{z \, :\, g_i(x,z,y_{-i})\leq 0, u_i\odot y_i = 0 \}$ verify the linear constraint qualification (see, e.g., \cite{solodov2010constraint}), since all constraints are linear. Thus, \eqref{eq:CQ} is verified. Then, the MPCC reformulation of problem \eqref{eq:Mixed-Levels-Card} is given by
    \begin{equation}\label{eq:MixedBP-MPCC-linear}
    \begin{array}{cl}
        \displaystyle\min_{x,y,u,\lambda,\nu} & \theta(x,y)\\ 
        \text{s.t.}  &\left\{\begin{array}{l}
            x\in X,\\
            \ind^\top u \geq q-K,\\
            u \in [0,1]^q,\\
            \forall i\in [M], \left\{
            \begin{array}{l}   
                \alpha_i(x,y_{-i}) + \sum_{k=1}^{m_i}C_i^{\top}\lambda_{ik} +   \nu_{i}\odot u_{i}= 0,\\  
                \lambda_{ik} (B_ix + C_iy_i +D_iy_{-i} - \beta_i)= 0, \\
                B_ix + C_iy_i +D_iy_{-i} \leq \beta_i,\\
                u_i\odot y_i=0, \\
                \lambda_{ik}\geq 0.
            \end{array}
            \right.
        \end{array} \right.
    \end{array}
\end{equation}

\DAs{Note that,} Assuming that all $\alpha_i$ functions are affine maps and that $X=\{x~:~Ax\leq b\}$, problem \eqref{eq:MixedBP-MPCC-linear} is almost a linear programming problem with complementarity constraints: besides the objective function $\theta$, the only constraint that doesn't fit in this setting is the one associated to the derivatives of the Lagrangian functions.

For Linear Programming problems with complementarity constraints, one could apply the usual Branch-and-Bound algorithm. In fact, complementarity constraints and binary variables can be treated in the same way using SOS1 constraints (see, e.g. \cite{KleinertSchmidt2023,aussel2023tutorial}), and so Branch-and-Bound also applies for Mixed-Integer Linear Programming problems with complementarity constraints. The same approach can be applied also if the objective function $\theta$ is convex.

Motivated by this observation and  by the fact that, following Remark \ref{rem:binary-u}, one can force the variable $u\in [0,1]^{q}$ to be an integer vector in $\{0,1\}^q$, we establish the following proposition, which is the last reformulation of this work.

\begin{proposition}\label{prop:FinalMPCC-LinearMixed} Consider problem \eqref{eq:MixedBP-MPCC-linear}, and the alternative formulation
            \begin{equation}\label{eq:MixedBP-MPCC-linear-final}
        \begin{array}{cl}
            \displaystyle\min_{x,y,u,\lambda,\eta} & \theta(x,y)\\ 
            \text{s.t.}  &\left\{\begin{array}{l}
                x\in X,\\
                \ind^\top u \geq q-K,\\
                u \in \{0,1\}^q,\\
                \forall i\in [M], \left\{
                \begin{array}{l}   
                    \alpha_i(x,y_{-i}) + \sum_{k=1}^{m_i}C_i^{\top}\lambda_{ik} +   \eta_i= 0,\\  
                    \lambda_{ik} (B_ix + C_iy_i +D_iy_{-i} - \beta_i) = 0, \\
                    B_ix + C_iy_i +D_iy_{-i} \leq \beta_i,\\
                    u_i\odot y_i=0, \\
                    \eta_i\odot (\ind - u_i) = 0,\\
                    \lambda_{ik}\geq 0.
                \end{array}
                \right.
            \end{array} \right.
        \end{array}
    \end{equation}
    
    Then, for every point $(\bar{x},\bar{y})\in \R^{p}\times \R^{q}$, the following assertions are equivalent:
    \begin{itemize}
        \item [(a)] $\exists u\in [0,1]^q, (\bar{x}, \bar{y}, u)$ is a feasible point (respectively, a global solution) of \eqref{eq:Mixed-Levels-Card}.
        \item[(b)] $\exists (u,\lambda,\nu)\in [0,1]^q\times\R^m_+\times\R^{q}$ such that $(\bar{x}, \bar{y},u,\lambda,\nu)$ is a feasible point (respectively, a global solution) of \eqref{eq:MixedBP-MPCC-linear}.
        \item[(c)] $\exists (u,\lambda,\eta)\in \{0,1\}^q\times\R^m_+\times\R^{q}$ such that $(\bar{x}, \bar{y},u,\lambda,\eta)$ is a feasible point (respectively, a global solution) of \eqref{eq:MixedBP-MPCC-linear-final}.
    \end{itemize}
\end{proposition}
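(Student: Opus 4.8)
The plan is to establish the two equivalences (a)$\iff$(b) and (b)$\iff$(c) separately, since the first is a lower-level KKT reformulation in the spirit of Theorem \ref{thm:Equiv_MPCC_Mixed}, while the second is an elementary change of variables between the multiplier $\nu$ and the auxiliary variable $\eta$, combined with the binary normalization of $u$ from Remark \ref{rem:binary-u}. Throughout I would only argue the equivalence of feasible points; the equivalence of global solutions then follows at once, because the leader's objective $\theta(x,y)$ depends only on $(x,y)$ and every transformation I use keeps $(\bar x,\bar y)$ fixed, so the projections onto the $(x,y)$-space of the three feasible sets coincide.

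For (a)$\iff$(b), I would observe that under the lower-level linear data assumption each follower's problem in \eqref{eq:Mixed-Levels-Card} has feasible set $Z_i^u(x,y_{-i}) = \{z : g_i(x,z,y_{-i})\leq 0,\ u_i\odot z = 0\}$ described entirely by affine (in)equalities, writing the equality $u_i\odot z = 0$ as the two affine inequalities $u_i\odot z\leq 0$ and $-u_i\odot z\leq 0$. Hence the linear constraint qualification holds and formula \eqref{eq:CQ} is valid at every feasible $y_i$; moreover $f_i(x,\cdot,y_{-i})$ is convex and differentiable. I would then repeat the two inclusions from the proof of Theorem \ref{thm:Equiv_MPCC_Mixed}: the condition $y\in GNEP(x,u)$ forces $-\nabla_{y_i}f_i\in N_{Z_i^u(x,y_{-i})}(y_i)$, and \eqref{eq:CQ} converts this into the existence of a sign-constrained multiplier $\lambda_i\geq 0$ for the inequalities $g_i\leq 0$ together with a free multiplier $\nu_i$ for the equality $u_i\odot y_i=0$, which are exactly the stationarity and complementarity relations of \eqref{eq:MixedBP-MPCC-linear}; conversely, convexity of $f_i$ turns the KKT relations back into optimality of each $y_i$, i.e. $y\in GNEP(x,u)$.

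For (b)$\iff$(c), the forward implication is the more delicate one. Given a feasible point of \eqref{eq:MixedBP-MPCC-linear} with $u\in[0,1]^q$, I would first replace $u$ by the binary vector $\bar u$ of Remark \ref{rem:binary-u}, rounding each nonzero entry up to $1$. This preserves $\ind^\top\bar u\geq\ind^\top u\geq q-K$, keeps $\bar u_i\odot y_i=0$ since $\bar u_{ij}>0\iff u_{ij}>0$, and leaves all $(x,y)$-dependent constraints untouched. The key point is that the stationarity term $\nu_i\odot u_i$ must be kept fixed, so I would rescale $\nu$ into $\bar\nu$ via $\bar\nu_{ij}:=\nu_{ij}u_{ij}$, which works because $\bar u_{ij}=1$ on the support of $u_i$, and set $\eta_i:=\bar\nu_i\odot\bar u_i=\nu_i\odot u_i$; then the stationarity equation is literally unchanged, while the new complementarity $\eta_i\odot(\ind-\bar u_i)=0$ holds for free, because $\eta_{ij}=0$ whenever $\bar u_{ij}=0$ and $(1-\bar u_{ij})=0$ whenever $\bar u_{ij}=1$. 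The converse (c)$\Rightarrow$(b) is immediate: given a feasible point of \eqref{eq:MixedBP-MPCC-linear-final}, I set $\nu:=\eta$; the constraint $\eta_i\odot(\ind-u_i)=0$ yields $\eta_i=\eta_i\odot u_i=\nu_i\odot u_i$, so the stationarity relation of \eqref{eq:MixedBP-MPCC-linear} is recovered, and every remaining constraint carries over directly since $\{0,1\}^q\subset[0,1]^q$.

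The main obstacle I anticipate is precisely the rescaling of $\nu$ in the step (b)$\Rightarrow$(c): one must check that rounding $u$ to $\bar u$ can be compensated in the stationarity equation without disturbing the complementarity pattern, and that the two sign regimes of the multipliers — the sign-constrained $\lambda_i\geq 0$ from the inequalities and the free $\nu_i$ (equivalently $\eta_i$) from the equality $u_i\odot y_i=0$ — are kept straight. Everything else is bookkeeping; and, exactly as for Theorem \ref{thm:Equiv_MPCC_Mixed}, I expect only global and not local optimality to transfer, because the interdiction variable $u$ again partitions the feasible set and can create spurious local optima.
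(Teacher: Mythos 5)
Your proposal is correct and follows essentially the same route as the paper: reducing (a)$\iff$(b) to the argument of Theorem \ref{thm:Equiv_MPCC_Mixed} via the linear constraint qualification, proving only feasibility equivalence (with global optimality following since $\theta$ depends only on $(x,y)$), and handling (b)$\iff$(c) through the binary normalization $\bar u$ together with the substitution $\eta = \nu\odot u$. Your choice $\nu:=\eta$ in the converse direction coincides with the paper's $\nu:=\eta\odot u$, since the constraint $\eta_i\odot(\ind-u_i)=0$ forces $\eta=\eta\odot u$, so the two arguments are identical.
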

\begin{proof}
Equivalence $(a)\iff (b)$ follows the same arguments as in Theorem \ref{thm:Equiv_MPCC_Mixed}, by noting that hypotheses $(H_1)-(H_3)$ are directly fulfilled by the linear data in the lower level. We thus \DA{only} prove $(b)\iff (c)$.

We will only show the equivalence between the assertions concerning feasible points, since the equivalence between global solutions follows from this first equivalence and the fact that the objective function is the same on both problems, depending only on the $(x,y)$ variables.

\paragraph{\DA{$(b)\implies (c)$}:} consider $\bar{u}$ as in \eqref{eq:Normalization-u}, that is: for each $k\in[q]$, $\bar{u}_k = 1$ if $u_k>0$, and $\bar{u}_k = 0$ if $u_k = 0$. Then, we can consider $\eta$ defined from $\nu$ as follows:
\[
\forall k\in[q],\,\eta_k = \begin{cases}
    \nu_k\cdot u_k\quad& \text{ if }u_k>0,\\
    0& \text{ if } u_k=0.
    \end{cases}
\]
Then, for every $k\in [q]$, we have that $\eta_k>0 \iff 1-\bar{u}_k = 0$, and that $\eta_k = \nu_k\cdot u_k$. With these observations, we can directly replace $u$ by $\bar{u}$ and $\nu_i\odot u_i$ by $\eta_i$ (for each follower $i\in [M]$) in problem \eqref{eq:MixedBP-MPCC-linear}, deducing that $(\bar{x},\bar{y},\bar{u},\lambda,\eta)$ is feasible for problem \eqref{eq:MixedBP-MPCC-linear-final}.

\paragraph{\DA{$(c)\implies (b)$}:} it is enough to define $\nu = \eta\odot u$, since the inclusion $u\in \{ 0,1\}^{q}$ and the constraints $\eta_i \odot (\ind-u_i) = 0$ allows to write:
\[
\nu_i\odot u_i = \eta_i\odot u_i \odot u_i = \eta_i, \quad \forall\,i\in[M].
\]
Replacing $\eta$ by $\nu\odot u$, the constraints $(\nu_i\odot u_i)\odot (\ind-u_i) =0$ become trivial in problem \eqref{eq:MixedBP-MPCC-linear-final}, and so we deduce that $(\bar{x},\bar{y},u,\lambda,\nu)$ is feasible for \DA{problem \eqref{eq:MixedBP-MPCC-linear}}. The proof is now completed.\qed
\end{proof}


\section{Application to Facility location problems with cardinality constraints \label{sec:app}}
	
In order to illustrate the \DA{applicability of the} theory \DA{in the sections above}\DAs{of  presented previous sections}, in this final section we provide \DA{now} a concrete example consisting in a variant of the \textit{Facility Location problem} \cite{marianov2003location,dan2019competitive}. This problem is a classic example in Mixed-Integer programming primers, and it is very relevant in several applications.  In particular, we focus on electric mobility and the need to optimize the charging infrastructure. Note that the model described below is an academic one and that made hypotheses could appear not too realistic but are assumed to avoid an useless complexification of the model since our aim is here to bring to the fore the effect/influence of the cardinality constraints. 

The problem we consider is the following: in a city, a social planner (the leader) is required to build charging stations for electric vehicles, within a (finite) list of strategic locations, indexed by the set $S$. The planner must take into account a set of drivers with electric cars (the followers), indexed by $i\in I$, \DA{who} will charge\DS{, once a week,} their vehicles on one \DA{or several} of the built stations\DAs{, recurrently}. Therefore, the \DA{planner} must decide whether to put or not a charging facility at each location $s\in S$, taking \DA{into account} the decision process of the \DA{drivers}. Here the planner is supposed to optimize social welfare, which considers the profit (or equivalently minimizing costs), as well as the benefit of the drivers. 

Due to the nature of the problem, we include some variants with respect to the classical formulation \DA{of the facility location problem}. First, the drivers are not necessarily committing to charge in the same station every time, but rather they might alternate within a set \DA{of} stations, with certain \DA{probabilities}. Second, the drivers play a \emph{congestion game}, in the sense that their preferences are influenced by how many drivers (in expectation) will \DA{choose} each of the stations. And finally, due to the characteristics of charging, a station can only serve a limited number of cars per day. With this limitation, the company must be able to satisfy the demand for any scenario induced by the distribution of the drivers. This last requirement is translated into a cardinality constraint. 

		
\subsection{Upper-level and mixed formulations}\label{eq:examle_facility}
	
	In what follows, the index $i\in I$ will represent the $i$th driver, and the index $s\in S$, will be the $s$th station. For each station $s\in S$, we consider:
	
	\begin{itemize}
		\item a parameter $a_s>0$, which represents the total price of a full charge (vehicles are assumed to have all the same need);
		\item a parameter $c_s>0$, which represents the cost of installing the station $s$;
		\item a parameter $K_s\in\N$, which represents the number of charges that can be served by station $s$ per day;
		\item a variable $x_s \in \{0,1\}$, where $x_s = 1$ if the station is built, and $x_s=0$, otherwise;
        \item a factor $\alpha_{s}$ of inconvenience due to congestion at station $s$ (common for all drivers).
	\end{itemize}
	For each driver $i\in I$ and each station $s\in S$, we consider:
	\begin{itemize}
		\item a parameter $p_{is}$, representing the preference \DA{of driver $i$} to go to the station $s$. This preference is influenced by the price $a_s$, but also by implicit factors, such as the distance to home, the perception of the service, etc.;
		\item a variable $y_{is}\in [0,1]$, which determines probability of driver $i$ going to station $s$.
	\end{itemize}

We denote by $x = (x_s)_{s\in S}$ the decision vector of the company, by $y_i = (y_{is})_{s\in S}$ the decision vector of each driver $i\in I$ \DA{which represents the percentage of charge that driver $i$ will do at station $s$} \DS{in expectation}, and by $y = (y_{is})_{i\in I,\, s\in S}$ the joint decision vector of all drivers.  
\DA{Note for simplification of notations, we assume that all the cars need the same charge of energy, which is normalized at value 1 and this for each \DS{week} of the lifespan $T$.}

With this in mind, each driver $i\in I$ aims to maximize \DA{his (concave) satisfaction} function
\begin{equation}\label{eq:FLP-ObjectiveFollower}
y_{i}\mapsto f_i(x,y_{i},y_{-i}) = \sum_{s\in S} \left( p_{is} - \DS{\alpha_{s}}\sum_{j\in I} y_{js} \right)y_{is},
\end{equation}
\DAs{which is} parametrized by the planner's decision $x$ and by the other drivers' decision $y_{-i}$. The social planner aims to maximize the social welfare of the system, \DAs{which is} given by

\begin{equation}\label{eq:FLP-ObjectiveLeaderWelfare}
    \theta(x,y) = \sum_{s\in S}\left(T\sum_{i\in I} a_{s}y_{is} - c_sx_s\right) + T\sum_{i\in I} f_i(x,y).
\end{equation}
\DAs{where $T$ is the number of days considered as lifetime of the project.} The first term of $\theta(x,y)$ stands for the profit of the \DA{social planner} \DAs{suppliers} of each station and the second term is the cumulative benefit of the drivers. \DAs{As a simplifying assumption, we consider that all drivers charge their cars the same days (For example, once a week every Saturday).}

The formulation of the SLMF game with cardinality constraints \eqref{eq:UpperLevel-Card} is then given by

\begin{equation}\label{eq:Numerical_Example_upperL}
	\begin{array}{|c|c|}
		\hline
		\begin{array}{cl}
			\displaystyle\max_{x,y}   & \theta(x,y)\\
			\text{s.t.} & \left\{
			\begin{array}{l}
				y\in NEP(x),\\
				\norm{y_{\bullet,s}} \leq K_s, \forall s\in S\\
				x_{s} \in \{0,1\}, \forall s\in S.              
			\end{array}\right.
		\end{array}
		&
		\begin{array}{cl}
			\displaystyle\max_{y_{i,\bullet}}   &\DS{f_i(x,y_i,y_{-i})}\\
			\text{s.t.} & \sum_{s\in S} y_{is}=1,\\
			& y_{is}\leq x_{s},\;\;s\in S,\\
			& y_{is}\geq 0 \;\; s\in S.
		\end{array}\\
		\hline
		\text{Leader }&\text{$i$th Follower}\\
		\hline
	\end{array}
\end{equation}
\noindent \DA{where we can observe that $|S|$ cardinality constraints are considered. Observe that it could be interesting to include in the model coupling constraints on power/energy capacity at the scale of the EV charging stations. It is not done here since the aim is only to enlighten the influence of cardinality constraints on solutions of the Single-Leader-Follower problem.}
\medskip\par
Similarly, the SLMF game with mixed cardinality constraints \eqref{eq:Mixed-Levels-Card} is given by

	\begin{equation}\label{eq:Numerical_Example_mixed}
	\begin{array}{|c|c|}
		\hline
		\begin{array}{cl}
			\displaystyle\max_{x,y,u}   & \theta(x,y)\\
			\text{s.t.} & \left\{
			\begin{array}{l}
				y\in NEP(x,u),\\
				\ind^\top u_{\bullet,s} \geq p-K_s, \forall s\in S\\
				u_{is} \in [0,1], \forall i\in I,\, \forall s\in S\\
				x_{s} \in \{0,1\}, \forall s\in S.              
			\end{array}\right.
		\end{array}
		&
		\begin{array}{cl}
			\displaystyle\max_{y_{i,\bullet}}   &\DS{f_i(x,y_i,y_{-i})}\\
			\text{s.t.} & \sum_{s\in S} y_{is}=1,\\
			& y_{is}\leq x_{s},\;\;\forall s\in S\\
			& u_i\odot y_i=0, \\
			& y_{is}\geq 0, \;\;\forall s\in S.
		\end{array}\\
		\hline
		\text{Leader }&\text{$i$th Follower}\\
		\hline
	\end{array}
\end{equation}

\medskip
In both problems, we write $NEP$ instead of $GNEP$, to emphasize that the equilibrium problem of the followers \DA{game} is a Nash equilibrium problem and not a generalized one: the followers only affect each other through the objective function and not the constraints.

\DS{\begin{remark}\label{rem:Ljubic} The reader can observe that problem \eqref{eq:Numerical_Example_mixed} is equivalent to the classical facility location problem, but with cardinality constraints. Indeed, since there is no cost for the interdiction multiplier $u$, in practice the leader can force each follower to go to a designed facility, deciding at the same time the construction of facilities and the allocation of drivers. For the sake of illustration, we keep formulation \eqref{eq:Numerical_Example_mixed} that can be directly compared with the abstract forms developed in this work.
\end{remark}}

\subsection{Calibration of numerical experiments}

In this \DA{case} study  we consider \DS{$150$} drivers/cars and ten facilities $(s \in S= [10])$, each of which has a fixed capacity $K_s$, the $a_s$ cost  of fully charging the car battery at the facility $s$ (assuming all cars have the same battery).  

Each facility belongs exclusively to one type: cheap (type 1), medium (type 2) and expensive (type 3). The values of $a_s$ and $K_s$ depend only on the type.  Additionally, the fixed construction cost [USD], also depending on the type, is given by $c_1 = 10000$ for the cheap type, $c_2 = 30000$ for the medium type, and $c_3 = 50000$ for the expensive type. \DAs{We show} The parameters of each type of facility \DA{is given} in Table \ref{table:facilities_examp} \DA{while} \DAs{We show} the type of each facility \DA{is precised} in Table \ref{table:facilities_type}. Note that the values of the above constants, even if inspired from real life values, have been chosen to be able to enlighten the effect of cardinality constraints in this academic example. In the same line, as a simplifying assumption, we suppose that all cars charge their cars once a week on the same days. The lifespan $T$ of this study case, thus the number of days of charging, is fixed to 52 weeks, representing one year of operation.
	\begin{table}
		\centering
		\begin{tabular}{ |c|c|c|c|c|c|c|c|c|c|c| } 
			\hline
			         Facility&1&2&3&4&5&6&7&8&9&10\\
            \hline
                  Type&1&1&1&1&1&2&2&2&3&3\\
			\hline
		\end{tabular}
		\caption{ Type of facilities.}
		\label{table:facilities_type}
	\end{table}
\begin{center}
	\begin{table}
		\centering
		\begin{tabular}{ |c|c|c|c|c| } 
			\hline
			 Type & Installation cost [USD] & Cost per charge [USD] & Capacity per day \\
			\hline
			1 &   $c_1 = 10000$  & $a_1 = 9$ & $K_1 = 20$\\
   			2 &   $c_2 = 30000$  & $a_2 = 15$ & $K_2 = 40$\\
      		3 &   $c_3 = 50000$  & $a_3 = 28$ & $K_3 = 100$\\
			\hline
		\end{tabular}
		\caption{ Description of facilities.}
		\label{table:facilities_examp}
	\end{table}
\end{center}
\begin{center}
\end{center}

For each facility, we consider  a random variable 
$\xi_s \sim \text{Lognormal(0.1,0.05)}$,
and  we set the inconvenience of the facility as \DS{$\alpha_{s} = 28\frac{\xi_s}{K_s}$}, for each facility $s\in S$. \DS{For this work, we consider the preference $p_{is}$ as follows: for each facility and each driver, we associate a random position (uniformly distributed) within the plane $[0,1]^2$, representing the locations of the facilities and the houses of the drivers. With this, we compute
\begin{equation}\label{eq:Preference}
    p_{is} = \frac{1}{d_{is}} - a_s ,
\end{equation}
where $d_{is}$ is the euclidean distance between the position of driver $i$ and the facility $s$.}

\DS{We performed 50 different experiments\DA{, here called ``cases",} where the values for $p$  and $\alpha$ were chosen randomly for each case. These values are included in the supplementary material. The problems under consideration are divided into two categories: those with cardinality constraints at the upper level associated to formulation \eqref{eq:Numerical_Example_upperL} (Upper configuration); and those with mixed cardinality constraints associated to formulation \eqref{eq:Numerical_Example_mixed} (Mixed configuration). We solve the given 100 problems (50 cases, 2 configurations per case), using \texttt{Gurobi v10.0.2} \cite{gurobi} \DA{coupled with the SOS1 package}, with \DA{an execution} time limit of 1 hour.}

\subsection{Results}

\DAs{In general} \DA{For the Mixed configurations problems}, the optimal solution was found within the time limit and with an acceptable optimality gap. \DAs{However} \DA{but}, for the upper configuration, the time limit was reached for many cases. Even worse, for about 20\% of cases it was not possible to find a feasible solution within the time limit. Table \ref{tab:summary} summarizes the performance values of the simulations. Figure \ref{fig:Gap_cases} provides the display of the optimality gap (given by \texttt{Gurobi}). The details can be found in the supplementary material.

\begin{table}[ht]
    \centering
    \begin{tabular}{|c|c|c|c|}
        \hline
        Configuration & Cases with feasible solution & Average Gap & Average Time [s]\\
        \hline
         Upper & 36 & 0.079 &2199.386\\
         Mixed & 50 & 0.024 &84.844\\ 
         \hline
    \end{tabular}
    \caption{Summary of simulations. The average gap is taken considering only the cases where feasible points were found. The average time is taken over the 50 cases, in both configurations.}
    \label{tab:summary}
\end{table}

\begin{figure} [ht]	
	\centering
	\includegraphics[width=.75\linewidth]{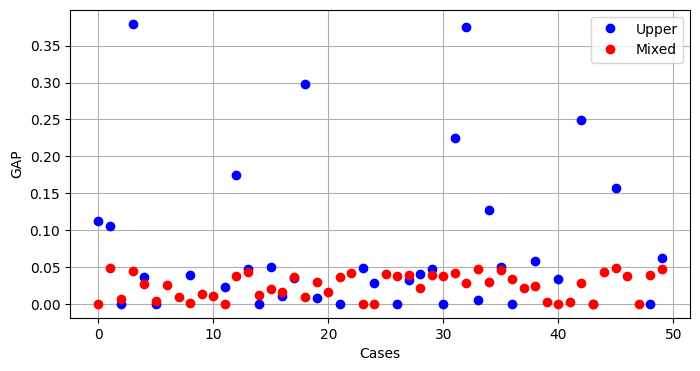}
	\caption{Optimality gap for each configuration in each case were at least one feasible solution was found. For the cases where no feasible solution was found, they are not displayed since the gap is $+\infty$. }
	\label{fig:Gap_cases}
\end{figure}

Figure \ref{fig:bulitfacilities_cases} shows two instances out of the 50 cases, emphasizing that under the parameters associated with these instances, an optimal solution was successfully obtained for both configurations. We can observe that for the Upper-Configuration, the optimization problem yields the same solution, which is to build two facilities with the highest service cost ($a_3$). \DA{Moreover one can remark that, in the Upper configuration, the optimal decision of the social planner does not depend on the case; see Fig. \ref{fig:bulitfacilities_cases} for cases 1 and 5 and supplementary material for the full set of cases.}

In contrast, for the Mixed-Configuration, the solution varies \DAs{between two possibilities} \DA{with cases}, considering \DA{the construction of three cheap and one expensive stations in case 1 and the construction of four cheap and two medium stations for case 5}. The details of all simulations can be found in the supplementary material.

\begin{figure}[ht]	
	\centering
	\includegraphics[width=.4\linewidth]{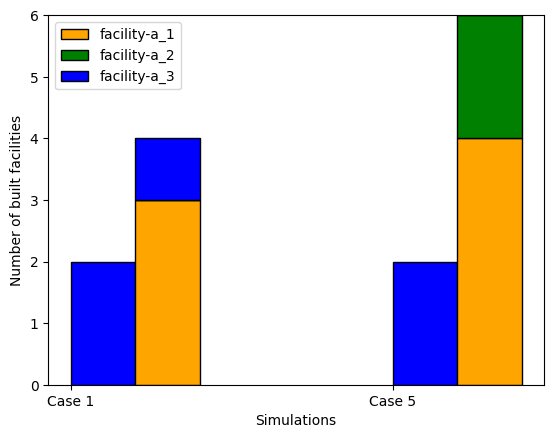}
	\caption{Each bar corresponds to the number of built facilities  of type $a_1$, $a_2$ or $a_3$, for \DAs{the different problems} \DA{cases 1 and 5}. The left bar corresponds to Upper configuration \DA{while} the right bar corresponds to Mixed configuration.}
	\label{fig:bulitfacilities_cases}
\end{figure}

Finally, we display the optimal values obtained in the simulations. Table \ref{tab:SummaryValues} \DA{presents} the average values of the profit of the supplier, the benefit of the followers, and the Social Welfare. Figure \ref{fig:ObjectiveFunc_cases} illustrates the distribution of objective function values for the leader. 

\begin{table}[ht]
    \centering
    \begin{tabular}{|c|c|c|c|}
    \hline
        Configuration & Profit of the Suppliers& Benefit of the followers & Social Welfare \\
        \hline
        Upper &118400.00&-186741.53&-68521.23\\
        Mixed &75129.44 &-108638.80&-34057.05\\
        \hline
    \end{tabular}
    \caption{Average values in [USD], considering for each configuration only the simulations with at least one feasible point. The Social Welfare is given by the sum of the profit of the leader and the benefit of the followers. }
    \label{tab:SummaryValues}
\end{table}

\begin{figure}	[ht]
\centering
	\includegraphics[width=.5\linewidth]{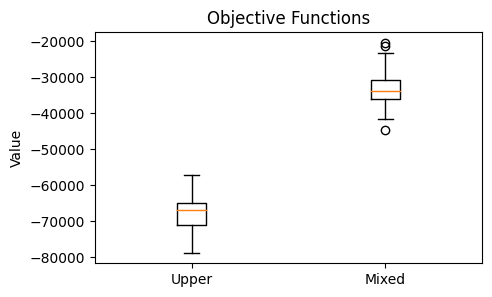}
	\caption{Boxplot of the values for the different objective functions of the leader, for each configuration. }
\label{fig:ObjectiveFunc_cases}
\end{figure}
Let us observe that\DA{, in all cases,} \DAs{it is not surprising that} the optimal social welfare is negative\DAs{: indeed} \DA{since} each driver is forced to charge his car and only the costs (for the driver) of charging the car is taken into account in the social welfare formula.

These results \DAs{show already} \DA{clearly enlighten} the difference between both configurations. The mixed formulation is, by construction, more flexible and it attains a better optimal value alternating between two optimal solutions: building one of each facilities, or building three cheap and one expensive. However, the upper formulation was trapped in the solution of building the two expensive facilities. Here we can appreciate \DA{the influence of} the restriction of the cardinality constraint: the suboptimal solution of building the two expensive facilities is chosen because \DA{is} the best one (and possibly the only one) that makes the cardinality constraints and the equilibrium constraints compatible.

While the results of optimization are substantially better for the mixed-configuration, \DA{simply endowing} a social planner with the interdiction-like multipliers is unrealistic. A very interesting perspective of this work is to study the facility location problem with a more complex model, including a mechanism to implement interdiction multipliers in such a way that drivers willingly comply to the planner decisions.

\section{Conclusions}

In this work, we \DA{focus} \DAs{delved} into Single-Leader-Multi-Follower problems involving  cardinality constraints.  There is limited literature on optimization with cardinality constraints \DA{but}, to our knowledge nothing on bilevel optimization with this kind of complex constraints; hence, we endeavored to develop both theoretical and practical results \DA{for this complex class of problems}.  First, we \DAs{demonstrated} \DA{proved} the existence of solutions for Single-Leader-Multi-Follower problems with cardinality constraints \DA{under mild assumption}, at the upper level. Additionally, equivalence results for global optimality were proven, coupled with reformulations of the original problem that facilitate numerical resolution.  
Motivated by the obtained reformulations, an alternative approach for a Single-Leader-Multi-Follower problem with cardinality constraints was proposed, considering such constraints split \DA{between} both levels. In this latter case, solution existence was guaranteed for a specific scenario where follower constraint functions are componentwise convex, weakly analytic, and continuous functions. As \DA{an illustration, we propose formulation of the facility location problem for charging of electric vehicles based on a Single-Leader-Multi-Follower model} including cardinality constraints.

\bigskip
\textbf{Acknowledgements.} The first author benefits of the support of a FMJH-PGMO project. The second and third author have been partially supported by the Center of Mathematical Modeling CMM, grant FB210005 BASAL funds for centers of excellence (ANID-Chile).
The second author was partially supported by Beca Doctorado Nacional ANID-2020/21201478.
The third author was partially supported by the project FONDECYT 11220586 (ANID-Chile). The authors thank I. Ljubi\'{c} that pointed out Remark \ref{rem:Ljubic} to us during the PGMO days 2023. 
\bigskip

\textbf{Supplementary Material.} Available at github repository:\\ 
\url{https://github.com/dasalas22/Bilevel-CC-SupplementaryMaterial}.
\bibliographystyle{plain}
\bibliography{ref_CC}

 
    \noindent Didier AUSSEL
	
	\medskip
	
	\noindent CNRS PROMES, UPR 8521, Université de Perpignan Via Domitia \newline 
    Perpignan, France
	\smallskip
	
	\noindent E-mail: \texttt{aussel@univ-perp.fr}\newline\vspace{0.2cm}

    \noindent Daniel LASLUISA
	
	\medskip
	
	\noindent Departamento de Ingeniería Matemática, Universidad de Chile \newline Santiago, Chile
	\smallskip
	
	\noindent E-mail: \texttt{dlasluisa@dim.uchile.cl}\newline\vspace{0.2cm}

    \noindent David SALAS
	
	\medskip
	
	\noindent Instituto de Ciencias de la Ingenier\'{i}a, Universidad de
	O'Higgins\newline 
    Rancagua, Chile
	\smallskip
	
	\noindent E-mail: \texttt{david.salas@uoh.cl} \newline\noindent
	\texttt{http://davidsalasvidela.cl}

 \end{document}